\numberwithin{equation}{section}
\theoremstyle{theorem}
\newtheorem{theorem}{Theorem}[section]
\newtheorem{corollary}[theorem]{Corollary}
\newtheorem{lemma}[theorem]{Lemma}
\newtheorem{prop}[theorem]{Proposition}
\theoremstyle{definition}
\newtheorem{definition}{Definition}[section]
\newtheorem{remark}[definition]{Remark}
\newcommand{\AI}{A_\infty}
\newcommand{\bx}{\boldsymbol{x}}
\newcommand{\ue}{\underline{e_i}}
\newcommand{\ux}{\underline{\boldsymbol{x}}}
\newcommand{\kk}{\boldsymbol{k}}
\newcommand{\ZZ}{\mathbb{Z}}
\newcommand{\NN}{\mathbb{N}}
\newcommand{\by}{\boldsymbol{y}}
\newcommand{\WH}[1]{\widehat{#1}}
\newcommand{\UL}[1]{\underline{#1}}
\newcommand{\WT}[1]{\widetilde{#1}}
\newcommand{\tdy}{h(\boldsymbol{y})}
\begin{document}

\title{Potentials of homotopy cyclic $\AI$-algebras}
\author{Cheol-Hyun Cho}
\email{chocheol@snu.ac.kr}
\address{Department of Mathematical Sciences, 
         Research Institute of Mathematics,
         Seoul National University,
         Gwanak-gu, Seoul, 151-747
         South Korea}
\author{Sangwook Lee}
\email{leemky7@snu.ac.kr}
\address{Department of Mathematical Sciences, 
         Seoul National University,
         Gwanak-gu, Seoul, 151-747
         South Korea}
\thanks{This work was supported by the Korea Research Foundation Grant funded by the Korean Government (MOEHRD, Basic Research Promotion Fund) (KRF-2008-C00031)}
\begin{abstract}
For a cyclic $\AI$-algebra, a potential recording the structure constants can be defined. We define an analogous potential for a homotopy cyclic  $\AI$-algebra and prove its properties. 
On the other hand, we find another different potential for a homotopy cyclic  $\AI$-algebra, which is related to
the algebraic analogue of generalized holonomy map of Abbaspour, Tradler and Zeinalian. 
\end{abstract}
\maketitle

\section{Introduction}
We first recall the definition of cyclic inner products due to Kontsevich \cite{K}, which may be understood as constant invariant symplectic structures in the non-commutative geometry.
\begin{definition}\label{def:cyc}
An $\AI$-algebra $(A,\{m_*\})$ is said to have a \textbf{cyclic inner product} if
there exists a skew symmetric non-degenerate, bilinear map $$<,> : A \otimes A \to \kk$$
such that for all integer $k \geq 1$,
\begin{equation}\label{cycsym}
	<m_k(x_1,\cdots,x_k),x_{k+1}> = (-1)^{K(\vec{x})}<m_k(x_2,\cdots,x_{k+1}),x_{1}>.
\end{equation}
Here, $(-1)^{K(\vec{x})}$ denotes the sign given by Koszul sign convention.
Namely,
\begin{equation}
(-1)^{K(\vec{x})} = (-1)^{|x_1|'(|x_2|' + \cdots +|x_{k+1}|')},
\end{equation}
where $|x|'$ is the shifted degree of $x$.
\end{definition}
This notion for the $\AI$-algebras and $\AI$-categories is crucial in homological mirror symmetry, for example,
 as in the work of Kontsevich-Soibelman\cite{KS} or  Costello\cite{Cos}. In particular, Costello has proved in \cite{Cos}
that the category of open topological conformal field theory is homotopy equivalent to the category of Calabi-Yau categories,
where the Calabi-Yau category is a categorical generalization of a cyclic $\AI$-algebra.

The first application of this gadget is to define a potential for a cyclic $\AI$-algebra, which in physics, is called an action of a string field theory: Let $(A,m^A_*)$ be a cyclic $\AI$-algebra.
Let $e_i$ be generators of $A$ as a vector space, which is assumed to be finite dimensional.
Define $\bx = \sum_i e_i x_i$ where $x_i$ are formal parameters with $deg(x_i) = - deg(e_i)$.
\begin{definition}\label{podef}
Define
\begin{equation}
\Phi^A(\bx) = \sum_{k=1}^\infty  \frac{1}{k+1} < m^A_k( \bx,\bx,\cdots,\bx),\bx>	
\end{equation}
\end{definition}
This may be considered as a systematic way of gathering structure constants of a cyclic $\AI$-algebra.  In the case of toric manifolds, this potential when
restricted to the Maurer-Cartan elements, becomes the Landau-Ginzburg superpotential of the mirror B-model (see \cite{CO},\cite{FOOO1}).

The notion of cyclicity is not  a homotopy invariant notion.  For example, an $\AI$-algebra which is homotopy equivalent to a cyclic $\AI$-algebra may not be cyclic. Instead, it has a  strong homotopy inner product, which was
defined by the first author in  \cite{Cho}:
for this, a cyclic inner product on $A$ may be understood as a special kind of $\AI$-bimodule map $A \to A^*$ (Lemma 3.1, \cite{Cho}).
An $\AI$-bimodule quasi-isomorphism $A\to A^*$ is called as an infinity inner product by Tradler (see \cite{T},\cite{TZ} for example).
\begin{definition}[\cite{Cho}, Definition 3.6]\label{shidef}
Let $A$ be an $\AI$-algebra.
We call an $\AI$-bimodule map $\phi:A \to A^*$ a \textbf{strong homotopy inner product} if
there exists a cyclic $\AI$-algebra $B$ with $\psi:B \to B^*$ and
an $\AI$-quasi-isomorphism $f:A \to B$ such that the following diagram of $\AI$-bimodules over $A$ commutes
\begin{equation}\label{comm3}
\xymatrix@C+1cm{ A \ar[r]_{ g = \widetilde{f}}  \ar[d]_\phi & \, B \ar[d]_\psi^{cyc} \\ A^*  &  \ar[l]^{g^*} B^*}	
\end{equation}
Here by $g:A \to B$, we denote the induced $\AI$-bimodule map $\widetilde{f}=g$ where $B$ is considered as an $\AI$-bimodule over $A$.
\end{definition}

In this paper, we give a definition of the potential for strong homotopy inner products and  prove its properties. It turns out that the definition of the potential \ref{def:homotopypotential} is very similar to
that of (\ref{podef}), but the prove that they are indeed related  is non-trivial and involves quite  combinatorial arguments.  Beyond the fact that it is quite natural to work with homotopy notions when
dealing with homotopy algebras, sometimes it is necessary to work with homotopy notions directly.
For example, in the work of Kontsevich and Soibelman \cite{KS}, they find a relation between cyclic cohomology of an $\AI$-algebra $A$ and cyclic symmetry. Given a cyclic cohomology class, one obtains first a homotopy inner product on $A$, and then cyclic inner product in the minimal model. We refer readers to \cite{CL} for
the explicit formulas of this correspondence in terms of negative cyclic cohomology $HC_{-}^\bullet(A)$ and strong homotopy inner products.

Now, let us assume that the $\AI$-algebra is unital(see definition \ref{def:unit}), and assume that the $\AI$-bimodule maps are also unital. Then, from the strong homotopy inner products $\{<,>_{p,q}\}$ we can define another potential as
follows. (Here, $<,>_{p,q}$ is obtained from the $(p,q)$-component of the bimodule map $\phi$. See (\ref{eq:pq}))
\begin{definition}\label{podef2}
Define
\begin{equation}
\Psi^A(\bx) = \sum_{p,q\geq 0}  \frac{1}{p+q+1} <  \underbrace{\bx,\cdots, \bx}_{p},\UL{\bx},\underbrace{\bx,\cdots,\bx}_{q}| I >_{p,q}
\end{equation}

\end{definition}
We prove that this potential is in fact {\em invariant under the gauge equivalence} for Maurer-Cartan elements. We also find its relation to the work of
Abbaspour, Tradler and Zeinalian \cite{ATZ}, where this map corresponds to the algebraic analogue of the generalized holonomy map from the negative cyclic cohomology to the function ring of Maurer-Cartan elements.

\begin{figure}
\begin{center}
\includegraphics[height=1.5in]{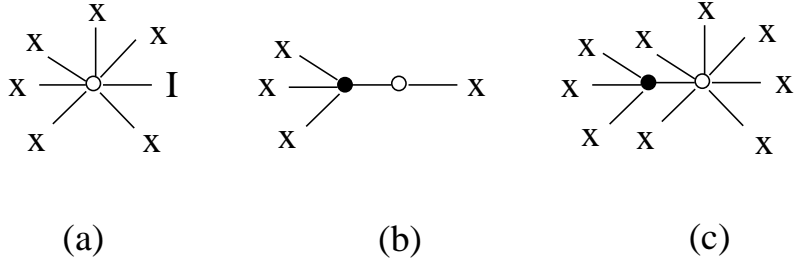}
\caption{(a) Potential $\Psi$, (b) Cyclic Potential $\Phi$, (c) Homotopy cyclic potential $\Phi$}
\label{potentials}
\end{center}
\end{figure}
The following Figure \ref{potentials} explains the differences of the expressions used in these potentials (without the coefficients). In the figure, the circle represent the strong homotopy inner product (following that of Tradler \cite{T}) whose horizontal arrows are for the inputs from modules. The filled circle represent the $\AI$-operation $m$.

This paper may be considered as a continuation of the paper \cite{Cho} to which
we refer readers for the notations and further introductions, especially about the signs. Throughout the paper
we assume that $H^\bullet(A)$ is finite dimensional. We thank H. Kajiura for sending us the unpublished manuscript (with Y. Terashima) on the decomposition theorem of $\AI$-algebras.

\section{Strong homotopy inner products}
We begin by proposing a modified definition of strong homotopy inner products, and discuss their equivalences and pull-backs.

We first make an observation that there exists certain subtlety in the direction of arrows in the diagram \ref{comm3} in the definition of strong
homotopy inner products. One could try to make the definition with the arrow $A \longleftarrow B$ instead of
$A \longrightarrow B$, but the resulting diagram would become weaker as there may exists elements of $A$ which is not covered by the image of the map $A \longleftarrow B$ in general. The subtlety actually disappears if we have non-degeneracy in the chain level.  The correct definition (which corresponds to exactly
the non-commutative invariant symplectic two form) is rather in between these two definitions: to make
the correct definition, we first recall the following characterization theorem of
strong homotopy inner products from \cite{Cho}.
\begin{theorem}[\cite{Cho}, Theorem 5.1]\label{thm:shi}
An $\AI$-algebra $A$ has a strong homotopy inner product if and only if
there exists an $\AI$-bimodule map $\phi:A \to A^*$, satisfying the following three conditions.
\begin{enumerate}
\item (Skew symmetry) For any $a_i,v,b_j,w \in A$,
$$\phi_{k,l}(\vec{a},v, \vec{b})(w) = -(-1)^{K}\phi_{l,k}(\vec{b},w,\vec{a})(v),$$
with $|K| = ( \sum_{i=1}^k |a_i|' + |v|')( \sum_{j=1}^l |b_j|' + |w|')$
\item (Closedness)
For any choice of a family $(a_1,\cdots,a_{l+1})$ and any choice of indices $1 \leq i<j<k\leq l+1$,
we have
$$(-1)^{K_i} \phi(.. ,\UL{a_i},..)(a_j) + (-1)^{K_j} \phi(.. ,\UL{a_j},..)(a_k) +
(-1)^{K_k} \phi(.. ,\UL{a_k},..)(a_i) =0,$$
where the arguments inside $\phi$ are uniquely given by the cyclic order of the family $(a_1,\cdots,a_{l+1})$,
and the signs $K_*$ are given by the Koszul convention:
\begin{equation}\label{eq}
K_* = (|a_1|' + \cdots + |a_*|')(|a_{*+1}|'+\cdots + |a_k|').
\end{equation}
\item (Homological non-degeneracy) For any non-zero $[a] \in H^\bullet(A)$ with $a \in A$, there exists a $[b] \in H^\bullet(A)$ with $b \in A$,  such that
$\phi_{0,0}(a)(b) \neq 0$.
\end{enumerate}
For non-degeneracy on the chain level, $\phi$ itself gives the strong homotopy inner product, otherwise the inner product obtained $\phi':A \to A^*$
is only equivalent to $\phi$.
\end{theorem}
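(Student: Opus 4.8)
The plan is to prove both implications, treating conditions (1)--(3) as the homotopy-invariant incarnation of a non-commutative closed non-degenerate two-form, i.e.\ a constant invariant symplectic structure in Kontsevich's sense. The bridge in both directions is the correspondence of Lemma~3.1 of \cite{Cho}: a strict cyclic inner product on an $\AI$-algebra is the same data as a special $\AI$-bimodule map into the dual whose only nonzero component is the pairing $\psi_{0,0}$ and which satisfies the cyclic symmetry (\ref{cycsym}). The forward implication is then a pullback computation, while the reverse implication is the substantive one and proceeds by transferring $\phi$ to a minimal model and straightening it to a strictly cyclic pairing by a non-commutative Moser-type argument.

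For the forward direction, suppose $\phi$ is a strong homotopy inner product, so by Definition~\ref{shidef} we are given a cyclic $\AI$-algebra $B$, a cyclic pairing $\psi:B\to B^*$, and a quasi-isomorphism $f:A\to B$ with $\phi = g^*\circ\psi\circ g$ for $g=\widetilde f$. I would verify conditions (1) and (2) by pulling the defining cyclic symmetry (\ref{cycsym}) of $\psi$ back through $g$ and $g^*$. Since a strict cyclic inner product on $B$ is in particular skew-symmetric and satisfies the cyclic (hence, after antisymmetrization, the closed) relation, and since composition of $\AI$-bimodule maps preserves these structural identities up to the Koszul signs recorded in the statement, the composite $g^*\psi g$ inherits skew-symmetry and closedness. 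For condition (3), the quasi-isomorphism $f$ induces an isomorphism $H^\bullet(A)\cong H^\bullet(B)$, and non-degeneracy of $\psi_{0,0}$ transports to homological non-degeneracy of $\phi_{0,0}$.

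For the reverse direction, given $\phi$ satisfying (1)--(3), I would first invoke the decomposition theorem for $\AI$-algebras (the Kajiura--Terashima result acknowledged above) to split $A$, up to quasi-isomorphism, into $H^\bullet(A)$ carrying a minimal $\AI$-structure plus a linear contractible piece, and transfer $\phi$ along this quasi-isomorphism as in the forward step. On the minimal model the induced $\phi_{0,0}$ is non-degenerate by (3), hence a genuine bilinear pairing, and skew-symmetry (1) makes it antisymmetric. The remaining task is to gauge away the higher components $\phi_{p,q}$ with $(p,q)\neq(0,0)$. I would set this up as an induction on the length filtration $p+q$: closedness (2) is precisely the cocycle condition ensuring that at each stage the obstruction to cyclicity is a coboundary in the appropriate cyclic cochain complex, so it can be absorbed by a bimodule homotopy, i.e.\ a modification of the map $g$, in direct analogy with the Poincar\'e-lemma step of Moser's argument. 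Keeping each homotopy compatible with the pairing produces the cyclic $B$ and the quasi-isomorphism realizing Definition~\ref{shidef}.

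The main obstacle is this straightening step: showing that (1) and (2) together force the positive-length parts of $\phi$ to be coboundaries that can be removed while preserving $\phi_{0,0}$. This requires pinning down the exact cyclic complex in which $\phi$ is a cocycle, checking that closedness is precisely the statement that its class is the standard symplectic class, and controlling the Koszul signs through the induction. The final dichotomy in the statement then reflects whether $\phi_{0,0}$ is already non-degenerate on the chain level---so that $\phi$ itself is the strong homotopy inner product---or only on cohomology, in which case the gauging yields an \emph{equivalent} $\phi'$ rather than $\phi$ itself.
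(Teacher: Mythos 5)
Your proposal follows essentially the same route as the paper's proof of this statement (which is deferred to Theorem 5.1 of \cite{Cho}, whose outline the paper recalls): the forward implication by pulling back the strict cyclic pairing through $g$ and $g^*$ (skew-symmetry and closedness being preserved under pullback, as in Lemma 5.6 of \cite{Cho}), and the converse by passing to the minimal model $\iota: H^\bullet(A) \to A$, where $\iota^*\phi$ becomes chain-level non-degenerate, and then straightening it to a strictly cyclic pairing via an order-by-order Darboux/Moser-type coordinate change, i.e.\ exactly the non-commutative symplectic two-form picture the paper emphasizes. Your closing dichotomy---chain-level non-degeneracy yielding $\phi$ itself, homological non-degeneracy yielding only an equivalent $\phi'$---also matches the paper's remark following the theorem.
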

The second condition is called {\em closed condition} since it is equivalent to the closed condition of the related non-commutative symplectic 2-form, and this plays a crucial role in proving the properties of the potential defined in this paper.

We also remark that in the proof of the Theorem \ref{thm:shi}, $\phi$ satisfying the three conditions, does not always become exactly a strong homotopy inner product (in the sense of definition \ref{shidef}),
but only equivalent  to  a strong homotopy inner product (the equivalence is defined below).

Hence, we propose to define the strong homotopy inner products by the Theorem \ref{thm:shi} because such a definition is equivalent to that of the non-commutative symplectic form as explained in \cite{Cho}.
\begin{definition}\label{def:cyc2}
    Let $A$ be an $A_{\infty}$-algebra. We call an $A_{\infty}$-bimodule map $\phi:A \rightarrow A^*$ a
    {\textbf{strong homotopy inner product}} if it is
    skew-symmetric, closed and homologically non-degenerate as in Theorem \ref{thm:shi}.

    And $A$ is called \textbf{homotopy cyclic $\AI$-algebra}, if there exists a strong homotopy inner product of $A$.
\end{definition}

Then, the main result of \cite{Cho} can be phrased as  the following theorem.
\begin{theorem}\label{prop:shi}
 Let $\phi:A \rightarrow A^*$  be an $A_{\infty}$-bimodule map.
\begin{enumerate}
\item If $\phi$ is a strong homotopy inner product in the sense of definition \ref{def:cyc2},
then there exists an $\AI$-algebra $B$ with a cyclic inner product
    $\psi: B \rightarrow B^*$
        and an $\AI$-quasi-isomorphism $\iota:B \rightarrow A$ satisfying the
    following commutative diagram of $A_{\infty}$-bimodule homomorphisms
\begin{equation}\label{defdiagram1}
\xymatrix{A \ar[d]_{\phi} & B \ar[l]_{\tilde{\iota}} \ar[d]^{cyc}_{\psi} \\ A^* \ar[r]^{{\tilde{\iota}}^*} & B^* }
\end{equation}

\item If there exists a cyclic $\AI$-algebra $B$ with $\psi:B \to B^*$ and
an $\AI$-quasi-isomorphism $f:A \to B$ such that the following diagram of $\AI$-bimodules over $A$ commutes
\begin{equation}
\xymatrix@C+1cm{ A \ar[r]_{ g = \widetilde{f}}  \ar[d]_\phi & \, B \ar[d]_\psi^{cyc} \\ A^*  &  \ar[l]^{g^*} B^*}	
\end{equation}
then, $\phi$ is a strong homotopy inner product in the sense of definition \ref{def:cyc2}.
\end{enumerate}
If $\phi_{0,0}$ is non-degenerate in the chain level, the old and the new definitions of a strong homotopy inner product are
equivalent.
\end{theorem}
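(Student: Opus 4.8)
The plan is to derive both statements from the characterization Theorem \ref{thm:shi}, keeping careful track of the direction of the $\AI$-bimodule arrows, since that direction is exactly the subtlety separating Definitions \ref{shidef} and \ref{def:cyc2}. Concretely, part (2) is the ``only if'' half of Theorem \ref{thm:shi}, reformulated for the new definition, while part (1) is the ``if'' half combined with the passage to a cyclic minimal model. The concluding sentence about chain-level non-degeneracy should then fall out as the case in which the normalization step in part (1) is vacuous.

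For part (2), I would start from the commuting diagram (\ref{comm3}), which presents $A$ as carrying a strong homotopy inner product in the original sense of Definition \ref{shidef}, with $\phi=g^{*}\circ\psi\circ g$ the pullback of the cyclic pairing $\psi$ along $g=\widetilde{f}$. The task is to verify the three conditions of Theorem \ref{thm:shi} for this $\phi$. Skew-symmetry and closedness propagate from $\psi$: since $\psi$ is a genuine cyclic inner product it is concentrated in its $(0,0)$-component and satisfies the strict cyclic identity (\ref{cycsym}), which forces strict skew-symmetry and the (degenerate) closed identity; expanding each $\phi_{k,l}$ as a sum over the components of $g$ pre- and post-composed with $\psi$ and matching terms by the Koszul signs then yields the homotopy skew-symmetry and the closed condition. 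For homological non-degeneracy I would use that $f$ is an $\AI$-quasi-isomorphism, so $H^{\bullet}(f):H^{\bullet}(A)\to H^{\bullet}(B)$ is an isomorphism; since $\psi_{0,0}$ descends to a non-degenerate pairing on $H^{\bullet}(B)$, its pullback along $H^{\bullet}(f)$ is non-degenerate on $H^{\bullet}(A)$, which is exactly condition (3).

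For part (1), given $\phi$ satisfying the three conditions I would first pass to a minimal model. Let $B=H^{\bullet}(A)$ carry the transferred minimal $\AI$-structure from homotopy transfer (or the decomposition theorem), with quasi-isomorphism $\iota:B\to A$, and set $\psi=\widetilde{\iota}^{\,*}\circ\phi\circ\widetilde{\iota}$, the pulled-back bimodule map $B\to B^{*}$; this makes the diagram (\ref{defdiagram1}) commute by construction. Because $B$ is minimal we have $H^{\bullet}(B)=B$, so homological non-degeneracy of $\phi$ transports to chain-level non-degeneracy of $\psi_{0,0}$. It then remains to upgrade the homotopy skew-symmetric, closed, chain-level non-degenerate $\psi$ into a \emph{strictly} cyclic inner product: reading $\psi$ as a closed, non-degenerate, constant-rank non-commutative $2$-form on the formal minimal $\AI$-manifold $B$, I would apply a Darboux-type normalization, building an $\AI$-automorphism of $B$ (a formal change of coordinates) carrying $\psi$ to the constant form obeying (\ref{cycsym}); composing $\iota$ with this automorphism delivers the cyclic $B$ and the quasi-isomorphism $\iota:B\to A$ required by the $B\to A$ diagram.

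The hard part will be exactly this normalization in part (1). Condition (2) is what guarantees that the associated non-commutative $2$-form is closed, and, together with chain-level non-degeneracy, it is what permits the order-by-order inductive construction of the normalizing $\AI$-isomorphism; carrying out this induction while controlling the Koszul signs is the combinatorially heavy step, and it is the reason the $\phi$ produced from the three conditions is in general only \emph{equivalent} to, rather than equal to, a strictly cyclic pull-back. Finally, for the concluding sentence, when $\phi_{0,0}$ is already non-degenerate on the chain level no normalization is needed: the transferred $\psi$ is strictly cyclic without modification, the equivalence $\phi\simeq\phi'$ of Theorem \ref{thm:shi} collapses to the identity, and the image of $\iota:B\to A$ covers $A$, so Definitions \ref{shidef} and \ref{def:cyc2} coincide.
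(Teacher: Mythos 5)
Your treatment of parts (1) and (2) follows the paper's own route almost exactly: for (2) you verify the three conditions of Theorem \ref{thm:shi} by pulling skew-symmetry and closedness back along $g=\widetilde{f}$ (the same device, Lemma 5.6 of \cite{Cho}, that the paper invokes for pullbacks) and transporting homological non-degeneracy through the quasi-isomorphism; for (1) you pull $\phi$ back to the minimal model $H^\bullet(A)$ along $\iota$, where homological non-degeneracy becomes chain-level non-degeneracy, run the Darboux-type normalization (which is precisely the non-degenerate case of the proof of Theorem \ref{thm:shi} in \cite{Cho}, so you may cite it rather than redo the induction), and then invert the resulting quasi-isomorphism between minimal models --- which is an honest $\AI$-isomorphism --- to reverse the arrow and obtain diagram (\ref{defdiagram1}). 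Your phrase ``composing $\iota$ with this automorphism'' should read ``with the \emph{inverse} of this automorphism,'' but since the inverse exists this is only a slip of wording, not of substance.

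The genuine flaw is in your final paragraph, on the case where $\phi_{0,0}$ is non-degenerate at the chain level. You claim that then ``no normalization is needed: the transferred $\psi$ is strictly cyclic without modification.'' This is false: a skew-symmetric, closed, chain-level non-degenerate bimodule map is not automatically constant, i.e., concentrated in its $(0,0)$-component and satisfying the strict identity (\ref{cycsym}) --- if closedness plus non-degeneracy forced constancy, the normalization step you yourself identify as the hard part would be vacuous in every case that matters. What chain-level non-degeneracy actually buys, and what the paper uses, is that the normalization can be performed on $A$ itself rather than on the minimal model: by the last sentence of Theorem \ref{thm:shi}, one then obtains an $\AI$-isomorphism $f:A\to B$ onto a cyclic $B$ with $\phi$ itself (not merely an equivalent $\phi'$) equal to the pullback of the cyclic form, so that diagram (\ref{comm3}) holds for $\phi$; taking the exact inverse of $f$ converts this into diagram (\ref{defdiagram1}), whence the same $\phi$ satisfies both the old and the new definitions. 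Your conclusion is correct, but the stated reason would not survive scrutiny; replace it with the argument just described.
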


\begin{remark}
Hence the new definition of the strong homotopy inner product is a little stronger than the diagram using
$A \longleftarrow B$, a little weaker than the diagram using
$A \longrightarrow B$ and equivalent to the non-commutative symplectic two form.
\end{remark}
\begin{proof}
In the non-degenerate case, from the proof of Theorem \ref{thm:shi}, one can find $B$ with an $\AI$-isomorphism $f:A \to B$ with the commuting diagram (\ref{comm3}). Hence one can find exact inverse of $f$ to make the commuting diagram (\ref{defdiagram1}).

Also, the statement $(2)$ can be checked without much difficulty from the commuting diagram, so we only consider the
statement $(1)$.
We explain that the proof of the theorem \ref{thm:shi} given in \cite{Cho} is enough to prove the existence of the diagram (\ref{defdiagram1}):
We recall from \cite{Cho} that the first step of the construction of cyclic $\AI$-algebra $B$ when $A$ is only
homologically non-degenerate was considering the minimal model $\iota : H^\bullet(A) \to A$ and consider the pull back $\iota^* \phi$.
\begin{equation}\label{defdiagram2}
\xymatrix{A \ar[d]_{\phi} & H^\bullet(A) \ar[l]_{\widetilde{\iota}} \ar[d]_{\iota^* \phi} \ar[r]_{\widetilde{f}} & H^\bullet(A) \ar[d]_{cyc} \\ A^* \ar[r]^{\widetilde{\iota}^*} & (H^\bullet(A))^*
& \ar[l]_{\widetilde{f}^*} (H^\bullet(A))^*}
\end{equation}
Then $\iota^* \phi$ is non-degenerate and skew symmetric and closed, and one proves the theorem for $\iota^* \phi$ to find
$f:H^\bullet(A) \to H^\bullet(A)$ with the above commutative
diagram. As the quasi-isomorphism $f$ on $H^\bullet(A)$ is in fact  an isomorphism, hence  there exists explicit inverse $f^{-1}$ and we obtain the diagram (\ref{defdiagram1}).
\end{proof}
We can also prove the following corollary.
\begin{corollary}
Let $\phi:A \rightarrow A^*$ be a strong homotopy inner product. Suppose we have an $\AI$-quasimorphism $f:A \to H^\bullet(A)$
with the commuting diagram
\begin{equation}\label{comm5}
\xymatrix@C+1cm{ A \ar[r]_{ g = \widetilde{f}}  \ar[d]_\phi & \, H^\bullet(A) \ar[d]_\psi^{cyc} \\ A^*  &  \ar[l]^{\widetilde{g}^*} H^\bullet(A)^*}	
\end{equation}
then, there exists an $\AI$-quasimorphism $h: H^*(A) \to A$ with the commuting diagram
(with the same $\psi$ as the above)
\begin{equation}\label{comm6}
\xymatrix@C+1cm{A  \ar[d]_\phi &  \ar[l]_{\tilde{h}} \, H^\bullet(A) \ar[d]_\psi^{cyc} \\ A^*  \ar[r]^{\tilde{h}^*} &   (H^\bullet(A))^*}	
\end{equation}
\end{corollary}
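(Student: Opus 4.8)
The plan is to reverse the direction of the quasi-isomorphism in the commuting square (\ref{comm5}), exploiting the fact that the target $H^\bullet(A)$ is a minimal $\AI$-algebra. The square (\ref{comm5}) encodes the identity $\phi = g^* \circ \psi \circ g$ with $g = \widetilde{f}$, and what (\ref{comm6}) asks for is an $\AI$-quasimorphism $h:H^\bullet(A)\to A$ realizing $\psi = \tilde{h}^* \circ \phi \circ \tilde{h}$, with the same $\psi$. Substituting the first identity into the second and using functoriality of the induced-bimodule construction, one gets $\tilde{h}^* \circ \phi \circ \tilde{h} = (g\circ\tilde{h})^* \circ \psi \circ (g\circ\tilde{h}) = \widetilde{(f\circ h)}^{\,*} \circ \psi \circ \widetilde{(f\circ h)}$. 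Hence it suffices to arrange $f\circ h = \mathrm{id}_{H^\bullet(A)}$ on the nose, so that $\widetilde{(f\circ h)}$ is the identity bimodule map and the right-hand side collapses to $\psi$.

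First I would invoke the minimal model inclusion $\iota:H^\bullet(A)\to A$ already used in the proof of Theorem \ref{prop:shi} (see diagram (\ref{defdiagram2})). The composite $G := f\circ\iota : H^\bullet(A)\to H^\bullet(A)$ is an $\AI$-quasi-isomorphism between minimal $\AI$-algebras; since minimality forces $m_1=0$, the cohomology of $H^\bullet(A)$ is the whole space, so the linear part $G_1$ is a genuine linear isomorphism. An $\AI$-morphism with invertible linear term is invertible, so $G$ admits a strict $\AI$-inverse $G^{-1}$, constructed order by order exactly as in the inversion step of Theorem \ref{prop:shi}. I then set $h := \iota\circ G^{-1} : H^\bullet(A)\to A$, which is a composition of a quasi-isomorphism with an isomorphism, hence itself an $\AI$-quasimorphism, and which satisfies $f\circ h = G\circ G^{-1} = \mathrm{id}_{H^\bullet(A)}$ strictly. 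This strict one-sided inverse is precisely what a mere homotopy inverse of $f$ would fail to provide, and its availability is exactly where the minimality of $H^\bullet(A)$ is used.

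With $h$ in hand, I would obtain (\ref{comm6}) by pulling the commuting square (\ref{comm5}) back along the $\AI$-morphism $h$. Precomposition by $h$ converts the $\AI$-bimodule maps over $A$ appearing in (\ref{comm5}) into $\AI$-bimodule maps over $H^\bullet(A)$ (with $A$ regarded as an $H^\bullet(A)$-bimodule via $h$), and it is compatible with dualization $(\cdot)^*$; combined with $f\circ h=\mathrm{id}$ this produces the identity $\psi = \tilde{h}^* \circ \phi \circ \tilde{h}$, which is the content of (\ref{comm6}).

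The main obstacle I expect is not the construction of $h$ but the bimodule bookkeeping in the last step: one must verify the functoriality $\widetilde{f}\circ\tilde{h} = \widetilde{f\circ h}$, the compatibility of the induced-map construction with the change of base algebra (over $A$ versus over $H^\bullet(A)$), and its interaction with the duality $(\cdot)^*$. These are routine extensions of the pull-back computations of \cite{Cho}, but they have to be checked on all $(p,q)$-components together with their Koszul signs, rather than merely on linear parts, and that is where the genuine labor lies.
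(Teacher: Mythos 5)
Your proposal is correct and follows essentially the same route as the paper: the paper also produces a strict right inverse $h$ with $f\circ h=\mathrm{id}$ by invoking the decomposition $A\cong A^H\oplus A^{lc}$ (your minimal-model inclusion $\iota$ is exactly the paper's $\eta^{-1}\circ i$, your $G^{-1}$ its $\xi$, and your $h=\iota\circ G^{-1}$ its $h=\eta^{-1}\circ i\circ\xi$), and then deduces the second square from the first. The only difference is cosmetic: you spell out the invertibility of the quasi-isomorphism between minimal models via its linear part and flag the bimodule bookkeeping, which the paper leaves implicit.
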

\begin{proof}
By the decomposition theorem of $\AI$-algebras, the map $f$ has a right inverse $\AI$-quasi-homomorphism, say $h:  H^*(A) \to A$ such that $f \circ h = id$.  To see this, consider an $\AI$-isomorphism $\eta$
$$\eta: A \to A^{dc} := A^H \oplus A^{lc}$$ to the direct
sum of the minimal $\AI$-algebra $A^H$ and the linear contractible $A^{lc}$.

 Let $\pi :A^{dc} \to A^H$ be the
projection and $i:A^H \to A^{dc}$ be the inclusion where the both are $\AI$-quasimorphisms with $ \pi \circ i = id$.
As $f$ is an $\AI$-quasimorphism, $f \circ \eta^{-1} \circ i:A^H \to H^\bullet(A)$ is an $\AI$-isomorphism, hence has an $\AI$-inverse say $\xi$.
Then, we define the right $\AI$ inverse $h = \eta^{-1} \circ i \circ \xi$. The property $ f \circ h = id$ can be checked immediately.
The second diagram then follows from the first commuting diagram.
\end{proof}

Now, we define equivalences between strong homotopy inner products.
\begin{definition}\label{def:equiv}
  Let $\phi: A \rightarrow A^*$ and $\psi: B \rightarrow B^*$ be strong homotopy inner products. They are called {\textbf{equivalent}} if there exists a cyclic symmetric $\AI$-algebra $H$ with a commutative diagram:
     $$\xymatrix{A \ar[d]_{\phi} & H \ar[d]^{cyc} \ar[l]_{qis} \ar[r]^{qis} & B \ar[d]^{\psi} \\
            A^* \ar [r] & H^* & B^* \ar[l]}$$
\end{definition}
One can actually choose $H$ to be a minimal(or canonical) model.

Given a strong homotopy inner product $\phi: B \to B^*$, and an $\AI$-quasi-isomorphism $f: A \to B$, we may define a pullback $f^* \phi : A \to A^*$ as a composition : $f^* \phi = \widetilde{f}^* \circ \WH{\phi} \circ \WH{\widetilde{f}}$
$$\xymatrix{A \ar[d]_{f^*\phi} \ar[r]_{\widetilde{f}} & B  \ar[d]_{\phi} \\ A^*  & \ar[l]^{{\widetilde{f}}^*} B^* }$$

\begin{prop}
    $f^* \phi$ defines a strong homotopy inner product on $A$ which is equivalent to $\phi$.
\end{prop}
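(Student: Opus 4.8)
The plan is to prove the two assertions separately: first that $f^*\phi$ satisfies the three conditions of Theorem~\ref{thm:shi}, so that it is a strong homotopy inner product by Definition~\ref{def:cyc2}, and then that it is equivalent to $\phi$ in the sense of Definition~\ref{def:equiv}. The single formal tool I would isolate at the outset is the functoriality of the pullback operation $f \mapsto f^*\phi = \widetilde{f}^* \circ \widehat{\phi} \circ \widehat{\widetilde{f}}$. Since the assignment $f \mapsto \widetilde{f}$ from $\AI$-morphisms to induced $\AI$-bimodule maps is itself functorial, sending $g \circ f$ to $\widehat{\widetilde{g}} \circ \widehat{\widetilde{f}}$, dualizing the outer factor reverses the order and yields the contravariant transitivity $(g \circ f)^*\phi = f^*(g^*\phi)$. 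I record this identity first, as it is the only input needed in the second half.

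First I would check that $f^*\phi$ is an $\AI$-bimodule map, being a composite of such, and then verify skew-symmetry and closedness. The cleanest route is the non-commutative symplectic dictionary of \cite{Cho}: a skew-symmetric closed bimodule map $\phi$ corresponds to a closed invariant non-commutative $2$-form, and under this dictionary $f^*\phi$ corresponds to the pullback of that form along $\widetilde{f}$. Since pullback commutes with the relevant differential, the pulled-back form is again closed and skew, so $f^*\phi$ satisfies conditions $(1)$ and $(2)$ of Theorem~\ref{thm:shi}; alternatively one verifies the two identities by a direct Koszul-sign computation on the composite $\widetilde{f}^* \circ \widehat{\phi} \circ \widehat{\widetilde{f}}$. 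For homological non-degeneracy I would isolate the $(0,0)$-component: the leading term of $\widetilde{f}$ is the linear part $f_1$, so $(f^*\phi)_{0,0}(a)(b) = \pm\,\phi_{0,0}(f_1 a)(f_1 b)$, and because $f$ is a quasi-isomorphism $f_1$ descends to an isomorphism $H^\bullet(A) \xrightarrow{\sim} H^\bullet(B)$. Homological non-degeneracy of $\phi$ therefore transfers to $f^*\phi$, giving condition $(3)$ and hence that $f^*\phi$ is a strong homotopy inner product.

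For the equivalence, rather than chasing a quasi-inverse of $f$, I would build the middle object from $f^*\phi$ itself. Applying Theorem~\ref{prop:shi}$(1)$ to the strong homotopy inner product $f^*\phi$ on $A$ produces, as in the construction \eqref{defdiagram2}, a cyclic $\AI$-algebra structure $\psi_A$ on the minimal model $H^\bullet(A)$ together with a quasi-isomorphism $\iota_A : H^\bullet(A) \to A$ satisfying $\iota_A^*(f^*\phi) = \psi_A$ on the nose. Then $f \circ \iota_A : H^\bullet(A) \to B$ is again a quasi-isomorphism, and by the transitivity recorded above $(f \circ \iota_A)^*\phi = \iota_A^*(f^*\phi) = \psi_A$. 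The cyclic algebra $H^\bullet(A)$ together with the two quasi-isomorphisms $\iota_A$ and $f \circ \iota_A$ then fills in the diagram of Definition~\ref{def:equiv} with both squares commuting exactly, exhibiting $f^*\phi$ as equivalent to $\phi$.

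The main obstacle I anticipate lies entirely in the first half: verifying that skew-symmetry and, especially, the closed condition $(2)$ survive the composition $\widetilde{f}^* \circ \widehat{\phi} \circ \widehat{\widetilde{f}}$. The closedness identity mixes all the components $\phi_{k,l}$ with the higher components of $\widetilde{f}$, so a direct proof requires careful bookkeeping of the cyclic orderings and the Koszul signs; invoking the symplectic-form dictionary of \cite{Cho} is what makes this tractable, reducing it to the statement that pullback preserves closed forms. By contrast the non-degeneracy step and the entire equivalence argument are formal once the transitivity of the pullback is in hand.
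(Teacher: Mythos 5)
Your proposal follows essentially the same route as the paper: skew-symmetry and closedness of $f^*\phi$ via pullback-preservation (the paper cites Lemma 5.6 of \cite{Cho}, which is exactly your ``symplectic dictionary'' step), homological non-degeneracy from $f$ being a quasi-isomorphism, and then equivalence by applying Theorem \ref{prop:shi}(1) to $f^*\phi$ itself and splicing the resulting square onto the defining square of $f^*\phi$. Your explicit transitivity identity $(g \circ f)^*\phi = f^*(g^*\phi)$ is just a careful spelling-out of what the paper dispatches with ``from the diagram, it is easy to see,'' so the argument is correct and matches the paper's proof.
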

\begin{proof}
  Since $\phi:B \to B^*$ is skew-symmetric and closed, so is $f^* \phi$ by lemma 5.6 of \cite{Cho}.
  It is not hard to check that $f^*\phi$ is also homologically non-degenerate as $f$ is a quasi-isomorphism.
  Hence, $f^*\phi$, by the proposition \ref{prop:shi} (1), is a strong homotopy inner product. Hence there exist
  an $\AI$-algebra $C$ which is cyclic symmetric ($\psi:C \to C^*$), and $\AI$-quasi-homomorphism $h:C \to A$ with the following commutative diagrams.

\begin{equation}
\xymatrix@C+1cm{ C \ar[r]_{ \widetilde{h}}  \ar[d]_\psi & \, A \ar[d]_{f^*\phi} \ar[r]_{\widetilde{f}}  & B \ar[d]_{\phi} \\ C^*  &  \ar[l]^{\widetilde{h}^*} A^* & \ar[l]^{\widetilde{f}^*} (B)^* }.
\end{equation}
  From the diagram, it is easy to see that $\phi$ and $f^*\phi$ is equivalent in the sense of definition
\ref{def:equiv}.
\end{proof}

\section{Potentials}
In this section we define a potential of a homotopy cyclic $\AI$-algebra and prove its properties.
Let $(A,m^{A}_{*})$ be given a strong homotopy inner product $\phi: A \to A^*$. Recall that an $\AI$-bimodule map $\phi$ is
given by a family of maps
$$\phi_{p,q}: A^{\otimes p} \otimes \underline{A} \otimes A^{\otimes q} \to A^*,$$
where the underlined $A$ is to emphasize that it is an $A$-bimodule for reader's convenience. We denote by
\begin{equation}\label{eq:pq}
<x_1,\cdots,x_p,\underline{v},y_1,\cdots,y_q|w>_{p,q} := \phi(x_1,\cdots,x_p,\underline{v},y_1,\cdots,y_q)(w).
\end{equation}
As in the cyclic case, let $e_i$ be generators of $A$ as a vector space, which is assumed to be finite dimensional.
(One may use pull-back defined in the previous section using the inclusion $\iota:H^*(A) \to A$ in the
case that $H^*(A)$ is finite dimensional).
Define $\bx = \sum_i e_i x_i$ where $x_i$ are formal parameters with $deg(x_i) = - deg(e_i)$.

Now we give a definition a potential for the strong homotopy inner products.
\begin{definition}\label{def:homotopypotential}
    The \textbf{potential} of an $\AI$-algebra $(A,m^{A}_{*})$ with a strong homotopy inner product $\phi: A \to A^*$ is defined as
    \begin{align}\label{stpot}
            \Phi ^{A}(\bx)
            & = \sum_{N=1}^{\infty} \Phi ^{A}_N(\bx)  \nonumber \\
            & := \sum_{N=1}^{\infty} \sum_{p+q+k=N}^{\infty} \frac{1}{N+1}<{\bx,\bx,\cdots , \bx}, \underline{m_{k}^{A}(\bx,\bx,\cdots,\bx)},\bx,\cdots,\bx|\bx >_{p,q}
    \end{align}
\end{definition}
The definition itself is somewhat similar to that of cyclic case(\ref{podef}). But in (\ref{podef}), the
fraction $1/k$ was to cancel out repetitive contribution to the potential due to cyclic symmetry (\ref{cycsym}), whereas in the strong homotopy case, such cyclic symmetry
of the rotation of arguments do not exist. Namely, in general
$$<e_1,\cdots,\UL{m_i(e_j,\cdots, e_{j+i-1})},\cdots,e_k|e_{k+1}> \neq <e_2,\cdots,\UL{m_i(e_{j+1},\cdots, e_{j+i})},\cdots,e_{k+1}|e_1>.$$
We later show that the combination of $\AI$-bimodule equation,
skew-symmetry and closed condition will compensate the absence of the strict cyclic symmetry.

We explain how the potential behaves under pull-backs, and this will show the relation between the potentials
of equivalent strong homotopy inner products. For $\AI$-quasi-isomorphism $h:B \to A$
the pull-back of a potential is defined as follows:
We assume $B$ is finite dimensional as a vector space, and denote by $\{f_*\}$ its basis, and
introduce corresponding formal variables $y_*$ as before.
Suppose
$$h_{k}(f_{j_1},\cdots,f_{j_k}) = h^i_{j_1,\cdots,j_k} e_i, \;\;\; h^i_{j_1,\cdots,j_k} \in \kk.$$
Then, we set
\begin{equation}\label{changeco}
	x_i \mapsto h^i_{j_{11}} y_{j_{11}} + h^i_{j_{21},j_{22}} y_{j_{21}} y_{j_{22}} + \cdots +
	h^{i}_{j_{l1},\cdots,j_{lk}} y_{j_{l1}}\cdots y_{j_{lk}} + \cdots.
\end{equation}
Then, one define the pull-back $h^*\Phi^A$ by using the above change of coordinate formula.
\begin{theorem}\label{thm:pot}
 Let $\phi:A \to A^*$ be a strong homotopy inner products. Let   $B$  be a cyclic $\AI$-algebra with a quasi-isomorphism
$h:B \to A$ providing the commutative diagram (\ref{defdiagram1}). Then, we have
    \begin{equation*}\Phi ^{B}=h^{*}\Phi ^{A} \end{equation*}
\end{theorem}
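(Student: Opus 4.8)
The plan is to reduce everything to the single substitution coming from (\ref{changeco}). Writing $\by=\sum_j f_j y_j$ and $\WH{h}(\by):=\sum_{r\ge1}h_r(\by,\dots,\by)$, formula (\ref{changeco}) says precisely $\bx\mapsto\WH{h}(\by)$, so that $h^{*}\Phi^{A}(\by)=\Phi^{A}(\WH{h}(\by))$, and the goal becomes $\Phi^{A}(\WH{h}(\by))=\Phi^{B}(\by)=\sum_{j\ge1}\frac{1}{j+1}\langle m^{B}_{j}(\by,\dots,\by),\by\rangle_{B}$, where $\langle u,w\rangle_{B}=\psi_{0,0}(\UL{u})(w)$ is the cyclic pairing of $B$. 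As an orientation I would first check the strict case ($h=h_1$ an isomorphism and $\phi=\phi_{0,0}$): there the $\AI$-morphism identity $m^{A}_{k}((h_1\by)^{\otimes k})=h_1 m^{B}_{k}(\by^{\otimes k})$ together with $\psi=h_1^{*}\phi\,h_1$ gives the result term by term with $N=k$. This isolates the real difficulty as everything the higher $h_r$ and $\phi_{p,q}$ contribute.

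Next I would treat the underlined slot. Substituting $\bx=\WH{h}(\by)$ turns the argument $m^{A}_{k}(\bx,\dots,\bx)$ into $m^{A}_{k}$ applied to sums of $h$-blocks; summing over the inner arity $k$ and invoking the defining relation of the $\AI$-morphism $h$ rewrites $\sum_k m^{A}_{k}(\WH{h}(\by)^{\otimes k})$ as a sum of single $h$-blocks $h_r(\by,\dots,m^{B}_{j}(\by,\dots,\by),\dots,\by)$, each carrying exactly one interior $m^{B}_{j}$. After this every argument of $\phi$ in $\Phi^{A}(\WH{h}(\by))$ — the left inputs, the underlined input, the right inputs, and the output — is an $h$-block, the underlined one containing the distinguished factor $m^{B}_{j}(\by,\dots,\by)$. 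Summing over all distributions of the $\by$'s into these blocks and reading the commuting diagram (\ref{defdiagram1}) as $\psi=\tilde h^{*}\circ\phi\circ\tilde h$, the $\phi$-pairings should reassemble into $\psi$-pairings of the raw $\by$'s with module input $m^{B}_{j}(\by,\dots,\by)$ and output a single $\by$. Because $B$ is genuinely cyclic, $\psi$ is concentrated in $\psi_{0,0}$ (this is exactly the content of (\ref{cycsym}), cf. Lemma 3.1 of \cite{Cho}), so every configuration placing a raw $\by$ in a left or right algebra slot of $\psi$ must cancel, leaving only $\psi_{0,0}(\UL{m^{B}_{j}(\by,\dots,\by)})(\by)=\langle m^{B}_{j}(\by,\dots,\by),\by\rangle_{B}$, the summands of $\Phi^{B}$.

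The hard part, and the genuine obstacle, is that this reassembly is not a verbatim application of $\psi=\tilde h^{*}\phi\tilde h$: that identity combines the various $\phi_{p',q'}$-terms with equal weight, whereas the summands of (\ref{stpot}) carry the unequal weights $\frac{1}{N+1}$ with $N=p+q+k$ recording the $A$-side arities, and a single surviving $B$-term $\langle m^{B}_{j}(\by^{\otimes j}),\by\rangle_{B}$ collects contributions from many $A$-summands of different $N$. Reconciling these weights is exactly where, as announced in the introduction, one must use the $\AI$-bimodule equation together with the skew-symmetry and closedness of $\phi$ from Theorem \ref{thm:shi}: these supply the cyclic-type symmetry that the homotopy-cyclic summands lack on the nose, letting one rotate the underlined $m$-block and the output through their $(j+1)$ positions and thereby replace the non-uniform $\frac{1}{N+1}$ by the uniform overcounting factor $\frac{1}{j+1}$, with all Koszul signs matched by the skew-symmetry clause. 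I expect this coefficient-and-sign bookkeeping to be the bulk of the work; once it is in place, the collapse above and the cyclicity of $B$ close the argument.
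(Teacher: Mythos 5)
Your skeleton --- substitute via (\ref{changeco}), use the $\AI$-morphism relation to push a single $m^B$ inside an $h$-block, read the diagram (\ref{defdiagram1}) as $\psi=\widetilde{h}^*\circ\WH{\phi}\circ\WH{\widetilde{h}}$, and use that a cyclic $\psi$ is concentrated in its $(0,0)$ component --- agrees with the reassembly step in the paper's proof. But the step you defer as ``coefficient-and-sign bookkeeping'' \emph{is} the theorem, and the mechanism you propose for it does not work. You want to rotate the underlined block and the output through $j+1$ positions so as to replace the non-uniform weights $\frac{1}{N+1}$ by a uniform $\frac{1}{j+1}$; however, the paper stresses (immediately after Definition \ref{def:homotopypotential}) that in the homotopy-cyclic setting no such rotation of arguments holds termwise, and skew-symmetry, closedness and the $\AI$-bimodule equation yield relations only among large aggregates of terms padded by ``dummy'' expressions. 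Moreover the relevant multiplicity lives on the $A$-side, not the $B$-side: a fixed summand of $h^*\Phi^A$ of $A$-type $N=p+q+k$ must be counted $N+1$ times (not $j+1$ times), with $k$ copies supplied by the bimodule equation (the $k$ possible positions of the underlined element inside $m_k$, Remark \ref{rem:1}) and the remaining $N-k$ copies supplied by the closedness condition; a single surviving $B$-term aggregates $A$-summands with different $N$ and hence genuinely different weights, so no rotation within one $\psi$-term can equalize them.

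The paper sidesteps the fraction mismatch by differentiating first (an idea attributed to Kajiura): Lemma \ref{lem:fundlem} asserts that $\frac{\partial}{\partial x_i}\Phi^A_N$ is \emph{fraction-free}, equal exactly to the terms where $e_i$ lands in the output slot with coefficient $1$. Equivalently (Lemma \ref{claim}), the derivative terms (\ref{eq:2.1})$+$(\ref{eq:2.2})$+$(\ref{eq:2.3}) equal $N$ times (\ref{eq:2.4}); proving this occupies most of the paper's argument, via twelve dummy expressions cancelling pairwise by skew-symmetry, the bimodule equation restricted to inputs with $e_i$ in the module slot, and the closed condition of Theorem \ref{thm:shi} applied with $(a_i,a_j,a_k)=(e_i,m_k(\vec{\bx}),\bx)$. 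This is consistent with the Euler identity $\sum_i x_i\partial_{x_i}\Phi^A_N=(N+1)\Phi^A_N$: the weights $\frac{1}{N+1}$ are chosen precisely so that the differentiated statement carries no fractions, which is why any proof, direct or not, must establish this $(N+1)$-fold counting identity. Once Lemma \ref{lem:fundlem} is in hand, your reassembly computation gives $\partial_{y_i}h^*\Phi^A=\partial_{y_i}\Phi^B$ for all $i$, and since both potentials vanish at $\by=0$ the theorem follows. So the genuine gap in your proposal is exactly Lemmas \ref{lem:fundlem} and \ref{claim}, and the rotation shortcut you suggest in their place is contradicted by the failure of termwise cyclic symmetry that motivates those lemmas.
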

\begin{proof}
The overall scheme of the proof, which is first to differentiate and then to compare, follows that of \cite{Cho} (idea due to Kajiura \cite{Kaj} in the unfiltered case).
The main difficulty, and the essential part of the proof is the
first step where we take (formal) partial derivatives on each side.
The following lemma shows that after partial
differentiation, the fraction on each summand disappears.
\begin{lemma}\label{lem:fundlem}
    \begin{align*}
        \frac{\partial}{\partial x_i}\Phi ^{A}_N(\bx)
        & = \frac{\partial}{\partial x_i} \sum_{p+q+k=N}^{\infty} \frac{1}{N+1}<{\bx,\bx,\cdots , \bx}, \underline{m_{k}^{A}(\bx,\bx,\cdots,\bx)},\bx,\cdots,\bx|\bx >_{p,q} \\
        & = \sum_{k=1}^{\infty}<{\bx,\bx,\cdots , \bx}, \underline{m_{k}^{A}({\bx,\bx,\cdots,\bx})},{\bx,\cdots,\bx}|e_i >_{p,q}.
    \end{align*}
\end{lemma}
We assume the lemma for a moment and show the proof of the theorem using the lemma.
Let $\{f_i\}$ be basis of $H^* (A)$, and let $\{y_i\}$ be corresponding formal variables for $\{f_i\}$, namely $\by:=\sum_{i}y_i f_i.$

  We let $h(\boldsymbol{y}) := \sum_{k\geq 1}h_k ({\boldsymbol{y}}^{\otimes k})$. Then
  $$\frac{\partial}{\partial y_i}\Phi^{H^* (A)} = \sum_{k\geq 1}<m_k ^{H^* (A)}(\by, \cdots, \by), f_i>$$
  by cyclic symmetry, and
  \begin{eqnarray}
      \frac{\partial}{\partial y_i}h^* \Phi ^A &=& \frac{\partial}{\partial y_i}\sum_{\stackrel{k\geq 1}{p+q+1=N}}\frac{1}{N+1}<\tdy ^{\otimes p}, \underline{m_k ^A(\tdy, \cdots, \tdy)}, \tdy ^{\otimes q} | \tdy> \nonumber \\
      &=& \sum_{\stackrel{N\geq 1}{p+q+1=N}}<\tdy ^{\otimes p}, \underline{m_k ^A(\tdy, \cdots, \tdy)}, \tdy ^{\otimes q} | \frac{\partial}{\partial y_i}\tdy> \nonumber
  \end{eqnarray}
  by above lemma.
  From the diagram \ref{defdiagram1}, we  have $\psi = \widetilde{h}^* \circ \widehat{\phi} \circ \WH{\widetilde{h}}$, where all maps are $H^* (A)$-bimodule homomorphisms, consider following:
  \begin{eqnarray}
    & & \sum_{\stackrel{p,q\geq 0}{k\geq 1}} \psi(\by ^{\otimes p}, \underline{m_k^{H^* (A)}(\overrightarrow{\by})}, \by ^{\otimes q})(f_i) \label{eq:comdgm} \\
    &=& \sum_{\stackrel{p,q\geq 0}{k\geq 1}} (\widetilde{h}^* \circ \widehat{\phi} \circ \widetilde{h})(\by ^{\otimes p}, \underline{m_k^{H^* (A)}(\overrightarrow{\by})}, \by ^{\otimes q})(f_i) \nonumber \\
    &=& \sum_{\stackrel{p,q\geq 0}{k\geq 1}} \sum_{\stackrel{p_1+p_2+p_3=p}{q_1+q_2+q_3=q}} \widetilde{h}^* (\by ^{\otimes p_3}, \phi(\widehat{h}(\by ^{\otimes p_2}), \underline{h(\by ^{\otimes p_1}, \underline{m_k^{H^* (A)}(\overrightarrow{\by})}, \by ^{\otimes q_1})},\widehat{h}(\by ^{\otimes q_2})), \by ^{\otimes q_3})(f_i) \nonumber \\
    &=& \sum_{\stackrel{p,q\geq 0}{k\geq 1}} \sum_{\stackrel{p_1+p_2+p_3=p}{q_1+q_2+q_3=q}} \phi(\widehat{h}(\by ^{\otimes p_2}), \underline{h(\by ^{\otimes p_1}, \underline{m_k^{H^* (A)}(\overrightarrow{\by})}, \by ^{\otimes q_1})}, \widehat{h}(\by ^{\otimes q_2}))(h(\by ^{\otimes q_3}, \underline{f_i}, \by ^{\otimes p_3})) \nonumber \\
    &=& \sum_{\stackrel{p,q\geq 0}{k\geq 1}} \sum_{\stackrel{p_1+p_2+p_3=p}{q_1+q_2+q_3=q}} <\widehat{h}(\by ^{\otimes p_2}), \underline{h(\by ^{\otimes p_1}, \underline{m_k^{H^* (A)}(\overrightarrow{\by})}, \by ^{\otimes q_1})}, \widehat{h}(\by ^{\otimes q_2})|h(\by ^{\otimes q_3}, \underline{f_i}, \by ^{\otimes p_3})> \nonumber \\
    &=& \sum_{\stackrel{N\geq 1}{p+q+1=N}}<\tdy ^{\otimes p}, \underline{m_k ^A(\tdy, \cdots, \tdy)}, \tdy ^{\otimes q} | \frac{\partial}{\partial y_i}\tdy> \nonumber \\
    &=& \frac{\partial}{\partial y_i}h^* \Phi ^A \nonumber
  \end{eqnarray}

  Here, we denote by $m_k(\overrightarrow{\by})$ the expression $m_k(\by, \cdots, \by)$ for simplicity.
  The last identity holds because the sum is over all $p_1+p_2+p_3=p$ and $q_1+q_2+q_3=q$ where $p$ and $q$ run over all nonnegative integers, and there is $\AI$-bimodule relation $\widehat{m^A}\circ\widehat{h}=\widehat{h}\circ\widehat{m^{H^* (A)}}$.

  The summands of (\ref{eq:comdgm}) are all zero except for $(p,q)=(0,0)$ because $\psi$ is a cyclic symmetric inner product. Hence,
  $$\frac{\partial}{\partial y_i}h^* \Phi ^A = \sum_{k\geq 1} \psi(m_k^{H^* (A)}(\overrightarrow{\by}))(f_i) = \sum_{k\geq 1}<m_k ^{H^* (A)}(\by, \cdots, \by), f_i> = \frac{\partial}{\partial y_i}\Phi^{H^* (A)}.$$
  This proves the theorem.
\end{proof}
\begin{proof} We prove the lemma \ref{lem:fundlem}. Before we proceed, we give some remarks on the signs. The sign convention used in this paper and in \cite{Cho} is
the Koszul convention after the degree one shift. For simplicity, we omit the Koszul sign factor and
the expressions will appear with $+$ if it agrees with
the Koszul sign rule, $-$ if it is the negative of the Koszul sign. We illustrate this for two examples, from
which the general convention can be easily understood.
The first example is the $\AI$-equation with two inputs. We write
\begin{equation}\label{sgn:eq1}
m_1 m_2(x_1,x_2) + m_2(m_1(x_1),x_2) + m_2(x_1,m_1(x_2)) =0
\end{equation}
whereas the actual equation is
$$m_1 m_2(x_1,x_2) + m_2(m_1(x_1),x_2) +  (-1)^{|x_1|'} m_2(x_1,m_1(x_2)) =0.$$
The equation (\ref{sgn:eq1}) will be also written as
$$m_1 m_2(x_1,x_2) = - m_2(m_1(x_1),x_2) - m_2(x_1,m_1(x_2)).$$

The second example is the equation for $<m_2(\UL{x_1},x_2)|x_3>$.
Note that $\phi$ being $\AI$-bimodule map $\phi:A \to A^*$ with the induced $\AI$-bimodule
structure on $A^*$ (see expression (3.3) \cite{Cho} for the precise definition) implies the
following actual equation.
\begin{eqnarray*}
<m_2(\UL{x_1},x_2)|x_3> + <m_1(\UL{x_1}),x_2|x_3> + (-1)^{|x_1|'}<\UL{x_1},m_1(x_2)|x_3> \\
+ (-1)^{|x_1|'+|x_2|'}<\UL{x_1},x_2|m_1(x_3)> + (-1)^{|x_1|'}<\UL{x_1}|m_2(x_2,x_3)>=0.
\end{eqnarray*}
In this paper, the above equation will be written simply as
\begin{eqnarray*}
<m_2(\UL{x_1},x_2)|x_3> + <m_1(\UL{x_1}),x_2|x_3> + <\UL{x_1},m_1(x_2)|x_3> \\
+ <\UL{x_1},x_2|m_1(x_3)> + <\UL{x_1}|m_2(x_2,x_3)>=0.
\end{eqnarray*}

Now, we begin the proof of the lemma.
From now on, we replace $m_k ^A$ by $m_k$ if there is no ambiguity. By taking a derivative, the expression becomes:
    \begin{eqnarray}
        & & \frac{\partial}{\partial x_i} \sum_{p+q+k=N}^{\infty}<{\bx,\bx,\cdots , \bx}, \underline{m_{k}^{A}(\bx,\bx,\cdots,\bx)},\bx,\cdots,\bx|\bx >_{p,q} \label{eq:2.0} \\
        &=& \sum_{\stackrel{p+q+k=N}{r+s=k-1}}<\bx , \cdots , \bx , \underline{m_k(\overbrace{\bx, \cdots, \bx}^{r}, e_i, \overbrace{\bx, \cdots, \bx}^{s})}, \bx , \cdots , \bx|\bx>_{p,q} \label{eq:2.1} \\
        &+& \sum_{\stackrel{p+q+k=N}{r+s=p-1}}<\underbrace{\bx , \cdots , \bx}_{r} , e_i , \underbrace{\bx , \cdots , \bx}_{s} , \underline{m_k(\bx, \cdots, \bx)} , \bx , \cdots , \bx| \bx>_{p,q} \label{eq:2.2} \\
        &+& \sum_{\stackrel{p+q+k=N}{r+s=q-1}}<\bx , \cdots , \bx , \underline{m_k(\bx, \cdots, \bx)}, \underbrace{\bx , \cdots , \bx}_{r} , e_i, \underbrace{\bx , \cdots , \bx}_{s}| \bx>_{p,q} \label{eq:2.3} \\
        &+& \sum_{p+q+k=N}<\bx , \cdots ,\bx , \underline{m_k(\bx, \cdots, \bx)}, \bx , \cdots , \bx| e_i>_{p,q} \label{eq:2.4}.
    \end{eqnarray}
Now, the lemma can be proved by the following lemma.
\end{proof}
\begin{lemma}\label{claim}
The sum of the terms in (\ref{eq:2.1}),(\ref{eq:2.2}) and (\ref{eq:2.3}) equals to $N$ times of the expression (\ref{eq:2.4})
\end{lemma}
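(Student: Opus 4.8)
The plan is to read the identity as the homotopy analogue of the strict cyclic symmetry used in the cyclic case. After differentiating, the variable $e_i$ occupies exactly one of the $N+1$ slots of the word $\bx,\dots,\bx$ appearing in a summand of $\Phi^A_N$: it is either the paired argument $w$, which gives $(\ref{eq:2.4})$, or one of the $N$ \emph{input} slots --- inside $m_k$ (giving $(\ref{eq:2.1})$), among the $p$ inputs before $m_k$ (giving $(\ref{eq:2.2})$), or among the $q$ inputs after it (giving $(\ref{eq:2.3})$). In the strictly cyclic situation each of these $N+1$ terms is literally equal after a rotation, which is exactly why $\frac{1}{N+1}$ produces a clean derivative. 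Since no such strict symmetry is available here, my plan is to show that the three conditions of Theorem \ref{thm:shi} --- skew-symmetry, closedness, and the defining $\AI$-bimodule equation for $\phi$ --- together supply a \emph{weak} cyclic symmetry under which the $N$ input-slot terms collectively equal $N$ copies of the single paired term.

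The concrete strategy is to transport $e_i$ from each input slot to the paired slot, organized by the kind of move required. First I would treat the outer terms $(\ref{eq:2.2})$ and $(\ref{eq:2.3})$, in which $\UL{m_k(\bx,\dots,\bx)}$ is underlined and $\bx$ is paired. Since the underlined and paired slots are precisely the two distinguished slots of $\phi$, the skew-symmetry of Theorem \ref{thm:shi}(1) exchanges them up to a Koszul sign, while the closed condition of Theorem \ref{thm:shi}(2) is the three-term relation that rotates this distinguished pair past an adjacent input. Iterating these two relations lets me march the distinguished pair around the cyclic word until $e_i$ lands in the paired slot, converting each such term into the matching $(\ref{eq:2.4})$ summand $\phi(\bx^{\otimes p},\UL{m_k(\bx^{\otimes k})},\bx^{\otimes q})(e_i)$, with the ``third'' terms produced by the closed relation arranged to telescope.

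The essential and harder step is $(\ref{eq:2.1})$, where $e_i$ sits \emph{inside} $m_k$: skew-symmetry and closedness are blind to the internal structure of $m_k(\bx,\dots,\bx)$ and cannot extract $e_i$. Here I would invoke the defining equation of $\phi$ as an $\AI$-bimodule map, $\widehat{m^{A^*}}\circ\widehat{\phi}=\widehat{\phi}\circ\widehat{m^A}$, whose $A^*$-module-action terms pair $m^A$ against the argument $w$. Read in the appropriate direction, this equation trades a term with $m$ acting on the underlined/module side for terms in which the operation straddles the paired slot, effectively pulling $e_i$ out of $m_k$ and toward $w$. The same equation is also needed to close the loop from the previous step: the rotations there temporarily produce terms in which $m_k(\bx,\dots,\bx)$ \emph{itself} occupies the paired slot, and the bimodule equation converts these back into $(\ref{eq:2.4})$-type terms. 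Summed over all insertions, the genuinely new $\AI$-summands are designed to cancel in pairs.

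Finally I would collect the bookkeeping: each of the $N$ input slots contributes exactly one copy of the corresponding $(\ref{eq:2.4})$ summand, yielding $N\cdot(\ref{eq:2.4})$ in total, which is the assertion. The main obstacle I anticipate is the control of cancellations together with the Koszul signs: both the closed condition (three terms) and the bimodule equation (many terms) introduce auxiliary summands, and one must verify that these cancel across the whole sum while every sign --- computed after the degree-one shift and tracked through the reindexing forced by skew-symmetry --- survives correctly. I expect the crux to be isolating the cancellation as a single telescoping sum indexed by the position of the distinguished pair and checking the sign on one generic transposition; here I would exploit that the formal variables $x_i$ of shifted degree absorb many of the Koszul signs, which should keep the sign analysis manageable.
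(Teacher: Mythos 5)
Your high-level architecture matches the paper's: differentiate, treat the lemma as a weak cyclic symmetry, use the $\AI$-bimodule equation for the term (\ref{eq:2.1}) where $e_i$ sits inside $m_k$, use closedness plus skew-symmetry for the outer terms (\ref{eq:2.2}) and (\ref{eq:2.3}), and recover the count $N=k+r+s$ slot by slot. But the proposal stops exactly where the paper's proof actually begins, and two of its guiding expectations are wrong in ways that matter. First, you never identify the mechanism by which (\ref{eq:2.1}) produces $k$ copies of (\ref{eq:2.4}): in the paper this is Remark \ref{rem:1} --- when the bimodule equation is restricted to inputs with $\underline{e_i}$ in the bimodule slot, the right-hand-side summand (\ref{eq:8}) is the \emph{same} expression for each of the $k=k_1+k_2+1$ possible positions of the underlined element inside $m_{k_1+k_2+1}$, so the sum carries multiplicity $k$. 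Without this observation, ``pulling $e_i$ out of $m_k$'' yields one copy of (\ref{eq:2.4}) rather than $k$, and the final count fails.

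Second, your claim that the ``third'' terms produced by the closedness relations can be ``arranged to telescope'' is not what happens, and your plan offers no substitute for the actual mechanism. Closedness applied to the triple $(e_i,\, m_k(\vec{\bx}),\, \bx)$ produces three terms: an (\ref{eq:2.3})-type term; a term with $\underline{e_i}$ underlined and $m_k(\vec{\bx})$ in the paired slot, which \emph{skew-symmetry} (not the bimodule equation, as you suggest) converts into minus a copy of (\ref{eq:2.4}) --- this is precisely where the $r+s=N-k$ copies come from; and a term of type $(xii)$ (resp.\ $(vi)$) in which $e_i$ is paired but $m_k(\vec{\bx})$ is \emph{not} underlined. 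These last terms are genuinely new and do not telescope away: the paper must manufacture them by adding twelve dummy expressions $(i)$--$(xii)$ that sum to zero in skew-symmetric pairs (Lemma \ref{lem:eq}), consuming $(i)$ and $(vii)$ as the terms (\ref{eq:6}), (\ref{eq:7}) of the first bimodule equation, and disposing of $(ii)$--$(v)$, $(viii)$--$(xi)$ through a \emph{second} family of bimodule equations --- those in which $e_i$ is \emph{not} the bimodule element --- whose right-hand sides (\ref{eq:irr3})--(\ref{eq:irr8}), (\ref{eq:rel1}), (\ref{eq:rel2}) cancel by pairing $(n,m)$-type against $(m,n)$-type terms. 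This second application of the bimodule equation and the global pairing of dummies across it constitute the combinatorial core of the lemma; your proposal anticipates that the new summands ``cancel in pairs'' but exhibits no pairing, so as written the argument has a genuine gap rather than a merely deferred verification.
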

\begin{proof}
To prove the lemma, we recall the notion of $\AI$-bimodule equation.
  The equation for $\AI$-bimodule homomorphism $A \rightarrow A^*$ is
\begin{equation}\label{AIBeq1}
 \phi \circ \WH{b_A} =b_{A^*} \circ \WH{\phi}
\end{equation}
 with $b_A=m^A$ when $A$ is considered to be an $\AI$-bimodule, and $b_{A^*}$ is defined by canonical construction of the dual of the $\AI$-bimodule $A$. Here $\WH{\phi}$ is
  the coalgebra homomorphism induced from $\phi$ (We refer readers to \cite{Cho},\cite{T} or \cite{GJ} for details).
  Let us restrict the equation (\ref{AIBeq1}) to the case $(\bx , \cdots , \bx , \underline{e_i} , \bx , \cdots , \bx) \in A^{\otimes n}\otimes \underline{A} \otimes A^{\otimes m}$ where $n+m+1=N$. Then it becomes
    \begin{eqnarray}
        & & \sum_{\stackrel{p+j_1=n}{j_2+q=m}}<\bx , \cdots , \bx , \underline{m_{j_1+j_2+1}(\overbrace{\bx, \cdots, \bx}^{j_1}, \underline{e_i}, \overbrace{\bx, \cdots, \bx}^{j_2})}, \bx , \cdots , \bx|\bx>_{p,q}  \label{eq:5} \\
        &+& \sum_{\stackrel{k_1+k_2+j=n}{p=k_1+k_2+1}}<\underbrace{\bx , \cdots , \bx}_{k_1} , m_{j}(\bx, \cdots, \bx), \underbrace{\bx , \cdots , \bx}_{k_2} , \underline{e_i}, \bx , \cdots , \bx|\bx>^{dum}_{p,m}  \label{eq:6} \\
        &+& \sum_{\stackrel{l_1+l_2+h=m}{q=l_1+l_2+1}}<\bx , \cdots , \bx , \underline{e_i} , \underbrace{\bx , \cdots , \bx}_{l_1} , m_h(\bx, \cdots, \bx), \underbrace{\bx , \cdots , \bx}_{l_2}| \bx>^{dum}_{n,q} \label{eq:7}\\
        &=& \sum_{\stackrel{p+k_1=m}{k_2+q=n}}<\bx , \cdots , \bx , \underline{m_{k_1+k_2+1}(\overbrace{\bx, \cdots, \bx}^{k_1}, \underline{\bx}, \overbrace{\bx \cdots \bx}^{k_2})}, \bx , \cdots , \bx| e_i>_{p,q}. \label{eq:8}
    \end{eqnarray}
    \begin{remark}\label{rem:1}
    It is important to note that the expression in the summand (\ref{eq:8}) is obtained in $k:=k_1+k_2+1$ different ways according to the position of the (underlined) bimodule element $\underline{x}$. Namely, different choices of a bimodule element still give rise to equivalent expressions.
    \end{remark}
    \begin{remark}\label{rem:12}
    Here the terms (\ref{eq:5}) and (\ref{eq:8}) in the above $\AI$-bimodule equation do appear in the process of derivation (\ref{eq:2.0}) but
    the terms (\ref{eq:6}) and (\ref{eq:7}) do not appear in (\ref{eq:2.0}). Hence we marked them as $<,>^{dum}$ for reader's convenience to
    indicate that they are dummy parts. We will show how all the dummy parts are canceled out or used in the subsequent process.
    \end{remark}
    We say an expression such as in (\ref{eq:5}), $ \cdots$, (\ref{eq:8}) to be of $(n,m)$-type as it is obtained
    from the input $A^{\otimes n}\otimes \underline{A} \otimes A^{\otimes m}$. And for convenience, we will denote
    the summands as in (\ref{eq:5}), $ \cdots$, (\ref{eq:8}) to be $\sum_{(n,m)-type}$ instead of writing down specific conditions.

Note that the expression  (\ref{eq:5})  equals (\ref{eq:2.1}) and (\ref{eq:8}) provides $k$ times the expression (\ref{eq:2.4}) from the
remark \ref{rem:12}. Hence, we may use the above $\AI$-bimodule equation to turn (\ref{eq:2.1}) into
$k$-times (\ref{eq:2.4}) together with dummy terms. Hence, to prove the Lemma \ref{claim}, we need to find $N-k$ times the expression (\ref{eq:2.4})
from what are left out in (\ref{eq:2.0}) together with the new dummy terms.

 Now, we explain the dummy expressions we add to the equation.
 We set $m_j(\vec{\bx}):=m_j(\bx, \cdots, \bx)$ just for simplicity.
 The following are the dummy expressions to be added to the expression (\ref{eq:2.0}).

 \begin{eqnarray*}
       (i)& \sum_{p+j+k_1+k_2+1=N} & <\bx^{\otimes p}, m_j(\vec{\bx}), \bx^{\otimes k_1} , \ue , \bx^{\otimes k_2} | \bx>^{dum},\\
        (ii) & \sum_{p+j+k_1+k_2+k_3+2=N } & <\bx^{\otimes p} , e_i , \bx^{\otimes k_1} , m_j(\vec{\bx}) , \bx^{\otimes k_2}, \ux , \bx^{\otimes k_3} | \bx>^{dum},\\
        (iii)& \sum_{p+j+k_1+k_2+k_3+2=N} & <\bx^{\otimes p}, m_j(\vec{\bx}) , \bx^{\otimes k_1}  , e_i , \bx^{\otimes k_2} , \ux , \bx^{\otimes k_3} | \bx>^{dum}, \\
        (iv) & \sum_{p+j+k_1+k_2+k_3+2=N }& <\bx^{\otimes p} , e_i ,  \bx^{\otimes k_1} , \ux , \bx^{\otimes k_2} , m_j(\vec{\bx}) , \bx^{\otimes k_3} | \bx>^{dum},\\
        (v)& \sum_{ p+k_1+k_2+l_1+l_2+2=N}& <\bx^{\otimes p} , m_{j}(\bx^{\otimes l_1}, e_i,\bx^{\otimes l_2}), \bx^{\otimes k_1} , \ux , \bx^{\otimes k_2} | \bx>^{dum}, \\
        (vi)& \sum_{  p+j+k_1+k_2+1=N} & <\bx^{\otimes p} , m_j(\vec{\bx}) , \bx^{\otimes k_1} , \ux , \bx^{\otimes k_2}| e_i>^{dum},\\
        (vii)& \sum_{p+j+k_1+k_2+1=N } & <\bx^{\otimes p} , \ue , \bx^{\otimes k_1} ,m_j(\vec{\bx}), \bx^{\otimes k_2} | \bx>^{dum},\\
        (viii)& \sum_{p+j+k_1+k_2+k_3+2=N }& <\bx^{\otimes p} , \ux , \bx^{\otimes k_1} , e_i , \bx^{\otimes k_2},m_j(\vec{\bx}) , \bx^{\otimes k_3}| \bx>^{dum}, \\
        (ix)& \sum_{ p+j+k_1+k_2+k_3+2=N} & <\bx^{\otimes p} , \ux , \bx^{\otimes k_1} , m_j(\vec{\bx}), \bx^{\otimes k_2} , e_i , \bx^{\otimes k_3} | \bx>^{dum},\\
        (x)& \sum_{ p+j+k_1+k_2+k_3+2=N} & <\bx^{\otimes p} , m_j(\vec{\bx}) , \bx^{\otimes k_1} , \ux , \bx^{\otimes k_2} , e_i , \bx^{\otimes k_3} | \bx>^{dum},\\
        (xi)& \sum_{p+k_1+k_2+l_1+l_2+2=N } & <\bx^{\otimes p} , \ux , \bx^{\otimes k_1} , m_{j}(\bx^{\otimes l_1},e_i,\bx^{\otimes l_2}) , \bx^{\otimes k_2}|\bx>^{dum}, \\
        (xii)& \sum_{p+j+k_1+k_2+1=N  } & <\bx^{\otimes p} , \ux , \bx^{\otimes k_1} , m_j(\vec{\bx}) , \bx^{\otimes k_2}| e_i>^{dum},
    \end{eqnarray*}

The following is easy to check.
\begin{lemma}\label{lem:eq}
By applying skew symmetry condition, we have
\begin{eqnarray*}
(i)+(xii)=0,& (ii)+(ix)=0, &(iii)+(viii)=0,\\
(iv)+(x)=0, & (v)+(xi)=0, & (vi)+(vii)=0
\end{eqnarray*}
Hence, the overall sum also vanishes:
$$(i)+ (ii)+ \cdots +(xi)+(xii)=0. $$
\end{lemma}
Hence we add all the dummy terms above to the expression (\ref{eq:2.0}) without changing the value.
Notice that the expression $(i)$ and $(vii)$ already appeared in (\ref{eq:6}) and (\ref{eq:7}) and is used
to turn (\ref{eq:2.1}) into $k$ times (\ref{eq:2.4}).

In addition we need the following $\AI$-bimodule equation (\ref{AIBeq1}) which is obtained considering the case that
the input $e_i$ is {\it not} the bimodule element of the expression: Given $m,n \in \NN$ with $n+m = N-1$, we have
    \begin{eqnarray}
        & & \sum_{(n,m)-type}\big(<\bx , \cdots , \bx , e_i , \bx , \cdots , \bx , m_j(\vec{\bx}) , \bx , \cdots , \bx , \ux , \bx , \cdots, \bx | \bx>^{dum} \label{eq:111} \\ 
        &+& <\bx , \cdots , \bx , m_j(\vec{\bx}) , \bx , \cdots , \bx , e_i , \bx , \cdots , \bx , \ux , \bx , \cdots, \bx| \bx>^{dum} \label{eq:112} \\ 
        &+& <\bx , \cdots , \bx , e_i , \bx , \cdots , \bx , \ux , \bx , \cdots , \bx , m_j(\vec{\bx}) , \bx , \cdots, \bx| \bx>^{dum} \label{eq:113} \\ 
        &+& <\bx , \cdots , \bx , m_j(\bx, \cdots, \bx, e_i, \bx, \cdots, \bx), \bx , \cdots , \bx , \ux , \bx , \cdots, \bx |\bx>^{dum} \label{eq:114} \\ 
        &+& <\bx , \cdots , \bx , \ux , \bx , \cdots , \bx , e_i , \bx , \cdots , \bx , m_j(\vec{\bx}), \bx , \cdots, \bx| \bx>^{dum} \label{eq:115} \\ 
        &+& <\bx , \cdots , \bx , \ux , \bx , \cdots , \bx , m_j(\vec{\bx}), \bx , \cdots , \bx , e_i , \bx , \cdots, \bx| \bx>^{dum} \label{eq:116} \\ 
        &+& <\bx , \cdots , \bx , m_j(\vec{\bx}) , \bx , \cdots , \bx , \ux , \bx , \cdots , \bx , e_i , \bx , \cdots, \bx| \bx>^{dum} \label{eq:117} \\  
        &+& <\bx , \cdots , \bx , \ux , \bx , \cdots , \bx , m_j(\bx, \cdots, \bx, e_i, \bx, \cdots, \bx) , \bx , \cdots, \bx| \bx>^{dum} \label{eq:118} 
        \big)
         \end{eqnarray} 
    \begin{eqnarray}
        &=& -\sum_{(n,m)-type} \big(<\bx , \cdots , \bx , e_i , \bx , \cdots , \ux ,  \cdots, \bx|m_j(\bx,\cdots,\ux,\cdots,\bx)>
         \label{eq:irr3} \\
       &+& <\bx , \cdots , \bx , e_i , \bx , \cdots , \bx , \underline{m_j(\bx, \cdots, \bx, \ux, \bx,\cdots,\bx)} , \bx , \cdots, \bx|\bx> \label{eq:rel1} \\
        &+& <\bx , \cdots , \bx , \underline{m_j(\bx,\cdots,\bx, \ux, \bx,\cdots,\bx)} , \bx , \cdots , \bx , e_i , \bx , \cdots, \bx |\bx> \big) \label{eq:rel2}       \\
        &+& <\bx , \cdots , \ux , \bx , \cdots , \bx , e_i , \bx , \cdots, \bx|m_j(\bx,\cdots,\bx, \ux, \bx,\cdots,\bx)> \label{eq:irr4} \\
        &+& <\bx , \cdots , \bx , \ux , \bx , \cdots , \bx|m_j(\bx,\cdots,\bx, \ux, \bx,\cdots,\bx, e_i, \bx,\cdots,\bx)> \label{eq:irr5} \\
        &+& <\bx , \cdots , \bx , \ux , \bx , \cdots , \bx |m_j(\bx,\cdots,\bx, e_i, \bx,\cdots,\bx, \ux, \bx,\cdots,\bx)> \label{eq:irr6} \\
        &+& <\bx , \cdots , \bx , \underline{m_j(\bx,\cdots,\bx, e_i,\bx, \cdots, \bx,\ux, \bx,\cdots,\bx)} , \bx , \cdots, \bx|\bx> \label{eq:irr7} \\
        &+& <\bx , \cdots , \bx , \underline{m_j(\bx,\cdots, \bx,\ux, \bx,\cdots, \bx,e_i, \bx,\cdots,\bx)} , \bx , \cdots, \bx|\bx> \big) \label{eq:irr8}
    \end{eqnarray}
In the case that $n=m$, we consider only the above equation, but if $n \neq m$, then
we also consider the similar equation for $(m,n)$-type also.
Observe the following facts.
\begin{itemize}
      \item For later purposes, we have separated certain dummy expressions in the LHS.
      \item Some of dummy expressions already appeared before.
      $$(\ref{eq:111}) \sim (ii), (\ref{eq:112}) \sim (iii),(\ref{eq:113})\sim (iv),(\ref{eq:114}) \sim (v),$$
      $$(\ref{eq:115}) \sim (viii),(\ref{eq:116}) \sim (ix),   (\ref{eq:117}) \sim (x),(\ref{eq:118})\sim (xi).$$
            \item As $(i),(vii)$ has been used in (\ref{eq:6}),(\ref{eq:7}), the dummy expressions (among the list above the Lemma \ref{lem:eq} which has not used elsewhere (yet)  are $(vi)$ and $(xii)$.
      \item Note the equivalence in (\ref{eq:irr7}) and (\ref{eq:irr8}) if the number of $\bx$ ahead of $e_i$ in the following expression
      are the same:
      $$\underline{m_j(\bx,\cdots,\bx, e_i,\bx, \cdots, \bx,\ux, \bx,\cdots,\bx)} =  \underline{m_j(\bx,\cdots, \bx,\ux, \bx,\cdots, \bx,e_i, \bx,\cdots,\bx)}$$

        \end{itemize}
%

Now, we show that in the right hand side of the equation, all the terms cancel out from skew symmetry.
In the case that $n=m$, we have the cancellations (from skew symmetry)
$$ (\ref{eq:irr3}) + (\ref{eq:rel2}) =0, \;  (\ref{eq:irr4}) + (\ref{eq:rel1}) =0, (\ref{eq:irr5})+(\ref{eq:irr7})=0,(\ref{eq:irr6})+(\ref{eq:irr8})=0. $$

Hence we may consider the case that $n\neq m$. In this case, there are similar cancellations between $(n,m)$-type and $(m,n)$-type terms.
Namely, if we use subscript $\,_{(n,m)},\,_{(m,n)}$ to denote $(n,m)$-type and $(m,n)$-type, we have
$$ (\ref{eq:irr3})_{(n,m)} + (\ref{eq:rel2})_{(m,n)} =0,\;\;
(\ref{eq:irr3})_{(m,n)} + (\ref{eq:rel2})_{(n,m)} =0,$$
$$ (\ref{eq:irr4})_{(n,m)} + (\ref{eq:rel1})_{(m,n)} =0,\;\;
(\ref{eq:irr4})_{(m,n)} + (\ref{eq:rel1})_{(n,m)} =0,$$
$$(\ref{eq:irr5})_{(m,n)}+(\ref{eq:irr7})_{(n,m)}=0,\;\;
(\ref{eq:irr5})_{(n,m)}+(\ref{eq:irr7})_{(m,n)}=0,$$
$$(\ref{eq:irr6})_{(m,n)}+(\ref{eq:irr8})_{(n,m)}=0, \;\;
(\ref{eq:irr6})_{(n,m)}+(\ref{eq:irr8})_{(m,n)}=0.$$
Hence, the right hand side always vanishes.
)
Consequently, if we collect all the remaining terms, there are
(\ref{eq:2.2}) and (\ref{eq:2.3}) and $(vi)$ and $(xii)$.
Now, we show that the addition of these expressions produce $N-k$ times (\ref{eq:2.3}), which proves the theorem.
Let us list the remaining terms first.
    \begin{eqnarray}
    (\ref{eq:2.3})  &  \sum_{p+k+j_1+j_2+1=N} &  <\bx^{\otimes p}, \underline{m_k(\vec{\bx})} , \bx^{\otimes j_1}, e_i ,\bx^{\otimes j_2} | \bx>, \\
     (\ref{eq:2.2})  &  \sum_{p+k+j_1+j_2+1=N}  &  <\bx^{\otimes p},e_i , \bx^{\otimes j_1}, \underline{m_k(\vec{\bx})} , \bx^{\otimes j_2} | \bx>, \\
      (vi)  &  \sum_{p+k+j_1+j_2+1=N}   & <\bx^{\otimes p}, m_k(\vec{\bx}) ,\bx^{\otimes j_1}, \ux ,\bx^{\otimes j_2} | e_i>, \\
        (xii)  &  \sum_{p+k+j_1+j_2+1=N} & <\bx^{\otimes p}, \ux , \bx^{\otimes j_1}, m_k(\vec{\bx}) , \bx^{\otimes j_2} | e_i>.
    \end{eqnarray}

Now we use closed condition with these terms to obtain (\ref{eq:2.4}).
\begin{enumerate}
        \item  By applying the closed condition in the theorem \ref{thm:shi} to (\ref{eq:2.3}) and $(xii)$, we obtain
 (here $(a_i,a_j,a_k)$ corresponds to $(e_i,m_k(\vec{x}),\bx)$)
            \begin{eqnarray*}
                & & <\underbrace{\bx , \cdots , \bx , \ux , \bx , \cdots , \bx}_{s} , m_k(\vec{\bx}) , \bx ^{\otimes r}|e_i> \\
                &+& <\bx , \cdots , \bx , \underline{m_k(\vec{\bx})} , \bx , \cdots , \bx , e_i, \bx , \cdots , \bx|\bx> \\
                &+& <\bx ^{\otimes r} , \underline{e_i} , \bx ^{\otimes s}|m_k(\vec{\bx})>  =0
            \end{eqnarray*}
        In fact, we obtain $s$ different such equations depending on the position of $\UL{\bx}$ in the first line.
        Hence, the sum of expressions  (\ref{eq:2.3}) and $(xii)$ produces $s$ times that of (\ref{eq:2.4})
        as the last term equals the minus of (\ref{eq:2.4}):
        $$<\bx ^{\otimes r} , \underline{e_i} , \bx ^{\otimes s}|m_k(\vec{\bx})>
        = - <\bx ^{\otimes s}, \underline{m_k(\vec{\bx})},\bx ^{\otimes r}|\underline{e_i}$$
        \item Similarly by applying the closed condition to (\ref{eq:2.2}) and $(vi)$,
            \begin{eqnarray*}
                & & <\bx ^{\otimes s} , m_k(\vec{\bx}) , \underbrace{\bx , \cdots , \bx, \ux , \bx , \cdots , \bx}_{r}|e_i> \\
                &+& <\bx , \cdots , \bx , e_i , \bx , \cdots , \bx , \underline{m_k(\vec{\bx})} , \bx , \cdots , \bx| \bx> \\
                &+& <\bx ^{\otimes r} , \underline{e_i} , \bx ^{\otimes s}|m_k(\vec{\bx})> \\
                &=& 0.
            \end{eqnarray*}
            we obtain $r$ different such equations depending on the position of $\UL{\bx}$ in the first line.
    \end{enumerate}
Hence we obtain $r+s = N-k$ times the expression of (\ref{eq:2.4}), which proves the lemma \ref{claim}.
\end{proof}

\section{Potential $\Psi$ and the generalized holonomy map}
In this section, we consider another potential $\Psi$ defined in the definition \ref{podef2} for
a unital homotopy cyclic $\AI$-algebra.
We discuss its gauge invariance and its relationship with the algebraic analogue of generalized holonomy map in \cite{ATZ}.

Let us first recall the definition of a unit for $\AI$-algebra.
\begin{definition}\label{def:unit}
An element $I \in C^0 = C^{-1}[1]$ is called a unit if
\begin{equation}\label{unit}
\begin{cases}
 m_{k+1}(x_1,\cdots,I,\cdots,x_k) = 0 \;\; \textrm{for} \; k\geq 2 \;\;\textrm{or}\;\; k =0 \;\; \\
m_2(I,x) = (-1)^{deg \, x} m_2(x,I) = x.
\end{cases}
\end{equation}
\end{definition}
We assume that the strong homotopy inner product $\phi:A \to A^*$ is an unital $\AI$-bimodule map, or
$\phi_{k,l}(\vec{a},v,\vec{b})(w)$ vanishes if one of $a_i$'s or $b_i$'s is a constant multiple of $I$.

We also recall the Maurer-Cartan elements and its gauge equivalences.
\begin{definition}
Let $A$ be an $\AI$-algebra. An element $b \in A^1$ satisfying $m(e^b)= \sum_{k}m_k(a,...,a)=0$
is called the Maurer-Cartan elements and we denote by $MC(A)$ the set of all Maurer-Cartan elements.
Let $\mathcal{MC}:=MC/\sim$ be the moduli space of Maurer-Cartan elements, whose gauge equivalence is defined
as follows(definition 2.3 of \cite{Fu}): $b$ is \textit{gauge equivalent} to $\WT{b}$ if there are one-parameter families $b(t)\in A^1[t],c(t)\in A^0[t]$ such that
\begin{itemize}
  \item $b(0)=b,b(1)=\WT{b},$ and
  \item $\displaystyle \frac{d}{dt}b(t)=\sum_{k \geq 1}m_k(b(t),...,b(t),c(t),b(t),...,b(t)).$
\end{itemize}
\end{definition}
We remark that $b(t)$ is also a Maurer-Cartan element for any $t$ (Lemma 4.3.7 of \cite{FOOO}).
Now, we prove the gauge invariance of the potential $\Psi$ for Maurer-Cartan elements.
\begin{prop}\label{prop:inv}
The potential $\Psi(x)=\sum_{p,q\geq 0}\frac{1}{p+q+1}<x^{\otimes p}\otimes\UL{x}\otimes x^{\otimes q}|I>$ when restricted to the Maurer-Cartan elements $MC$ is invariant under gauge equivalences.
 i.e. if $x(t)$ is a one-parameter family in the Maurer-Cartan solution space, then $$\frac{d}{dt}\Psi(x(t))=0.$$
\end{prop}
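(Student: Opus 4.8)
The plan is to reduce the gauge derivative to a single algebraic relation and then to annihilate it with the $\AI$-bimodule equation, in the spirit of the proof of Theorem \ref{thm:pot}. The first step is to prove the exact analogue of Lemma \ref{lem:fundlem} for $\Psi$: namely that the partial derivative absorbs the coefficient $\tfrac1{p+q+1}$ and places the new input into the bimodule slot,
\[
\frac{\partial}{\partial x_i}\Psi(x)=\sum_{p,q\ge 0}<x^{\otimes p},\UL{e_i},x^{\otimes q}\,|\,I>_{p,q}.
\]
Chain-rule differentiation produces $p+q+1$ terms for each $(p,q)$, according to whether $e_i$ lands in one of the left module slots, the underlined slot, or one of the right module slots; the claim is that, after using the closed condition of Theorem \ref{thm:shi} to cyclically move $e_i$ into the underlined position and skew-symmetry to match signs, these recombine with the weight so that only the underlined family survives. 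This is precisely the combinatorial mechanism of Lemmas \ref{lem:eq}--\ref{claim}, and I would run it verbatim.

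Granting this, and writing $\dot x:=\tfrac{d}{dt}x(t)=\sum_i \dot x_i\,e_i$, linearity in the bimodule slot gives
\[
\frac{d}{dt}\Psi(x(t))=\sum_i \dot x_i\,\frac{\partial}{\partial x_i}\Psi=\sum_{p,q\ge0}<x^{\otimes p},\UL{\dot x},x^{\otimes q}\,|\,I>_{p,q}.
\]
Now I would substitute the gauge flow $\dot x=\sum_{s,t\ge0}m_{s+t+1}(x^{\otimes s},c,x^{\otimes t})$, so that the underlined entry becomes exactly the collection of $m$-operations of $\widehat{b_A}$ that swallow the marked element $c$ in the bar chain $e^{x}\otimes\UL{c}\otimes e^{x}$. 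The remaining contributions of $\widehat{b_A}(e^{x}\otimes\UL{c}\otimes e^{x})$ are $m$-operations applied to blocks consisting only of $x$'s, which vanish because $x$ is a Maurer-Cartan element, $\sum_k m_k(x,\dots,x)=0$; hence they may be added for free, yielding the clean identity
\[
\frac{d}{dt}\Psi(x(t))=\big(\phi\circ\widehat{b_A}\big)(e^{x}\otimes\UL{c}\otimes e^{x})(I).
\]

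At this point the $\AI$-bimodule map equation (\ref{AIBeq1}), $\phi\circ\widehat{b_A}=b_{A^*}\circ\widehat{\phi}$, transports the expression to the dual side, $\big(b_{A^*}\circ\widehat{\phi}\big)(e^{x}\otimes\UL{c}\otimes e^{x})(I)$. The operations of $b_{A^*}$ evaluate the $A^*$-valued inner factor on an $m$-operation wrapping the output slot, which here is the unit $I$. By unitality of $A$ (Definition \ref{def:unit}) every such $m_k(\dots,I,\dots)$ with $k\ne 2$ is zero, and $m_1(I)=0$, so only the two wrappings $m_2(I,x)=x$ and $m_2(x,I)=(-1)^{|x|}x$ survive; together with unitality of $\phi$ (which discards any term in which $I$ or a spare pure-$x$ block lands in a forbidden module slot, the latter again via the Maurer-Cartan equation) these reduce the dual side to a sum of terms of the form $<x^{\otimes a},\UL{c},x^{\otimes b}\,|\,x>$. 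A short sign check, using $m_2(I,x)=(-1)^{|x|}m_2(x,I)$ and the skew-symmetry of $\phi$, shows the left-wrapping and right-wrapping contributions are mutually opposite, so the dual side vanishes and $\tfrac{d}{dt}\Psi=0$.

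I expect the genuine obstacle to be the first step — the $\Psi$-analogue of Lemma \ref{lem:fundlem} — where the coefficient $\tfrac1{p+q+1}$ must be reconciled with the unweighted closed and bimodule relations; this is the same delicate dummy-term bookkeeping as in Lemmas \ref{lem:eq}--\ref{claim}, and it is the place where skew-symmetry and closedness genuinely interact. By comparison the remaining steps are routine: the Maurer-Cartan equation only ever serves to delete pure-$x$ $m$-blocks, and unitality only serves to collapse the output slot $I$ down to the surviving $m_2$ terms.
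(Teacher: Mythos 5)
Your strategy is correct in outline, and it genuinely differs from the paper's proof; but your self-assessment of where the difficulty lies is inverted, and the one step you wave through (``a short sign check'') is the step that actually needs verification. On the first step: the paper never differentiates the weighted sum directly. Its Lemma \ref{lem:nofrac} removes the fraction \emph{before} differentiating, showing $\Psi(x)=\sum_k<\UL{x}\otimes x^{\otimes k}|I>$ by a single three-term closedness relation in which the term with $I$ in a module slot dies by unitality of the bimodule map $\phi$ and the term with $\UL{I}$ is flipped by skew-symmetry. Your symmetric derivative formula $\frac{\partial}{\partial x_i}\Psi=\sum_{p,q}<x^{\otimes p},\UL{e_i},x^{\otimes q}|I>$ is true and is proved by exactly this light mechanism, applied once more to move $e_i$ into the bimodule slot. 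Your plan to run the dummy-term machinery of Lemmas \ref{lem:eq}--\ref{claim} ``verbatim'' is misdirected: that apparatus is tailored to an interior $m_k$ inside the bracket and to the $\AI$-bimodule equation, and it does not transfer; here the unit trivializes the combinatorics, since in every closedness relation you need, one of the three terms has $I$ in a module slot and vanishes by unitality of $\phi$ --- a hypothesis your step-one sketch never invokes but cannot do without. So the step you flag as ``the genuine obstacle'' is in fact the easy part.

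The real divergence from the paper is the input fed into the bimodule equation (\ref{AIBeq1}): you use the symmetric element $e^x\otimes\UL{c}\otimes e^x$, whereas the paper (forced by its normal form, where the bimodule slot is the first factor) uses $\sum_l\UL{c}\otimes x^{\otimes l}+\sum_{l,m}\UL{x}\otimes x^{\otimes l}\otimes c\otimes x^{\otimes m}$. This changes the endgame qualitatively. With the paper's asymmetric input only right-wrappings survive, the dual side consists of the three families $<\UL{c}\otimes x^{\otimes l}|x>$, $<\UL{x}\otimes x^{\otimes l}|c>$, $<\UL{x}\otimes x^{\otimes l}\otimes c\otimes x^{\otimes m}|x>$, and these do \emph{not} cancel in pairs: the paper needs its Lemma \ref{lem:cycsumzero}, a telescoping consequence of closedness. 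With your symmetric input, each bracket $<x^{\otimes a},\UL{c},x^{\otimes b}|x>$ receives exactly two contributions --- left wrap from $(a+1,b)$ and right wrap from $(a,b+1)$ --- and since $x\in A^1$ gives $m_2(x,I)=(-1)^{\deg x}x=-m_2(I,x)$, pairwise cancellation is available, bypassing Lemma \ref{lem:cycsumzero} entirely. But note two things. First, the cancellation pairs \emph{identical} $\phi$-expressions, so skew-symmetry of $\phi$, which you cite, is not what matches them; what must be checked, in the sign conventions of \cite{Cho}, is that the Koszul prefactors of the two wrappings coincide. Second, this check is not optional: if the prefactors disagreed, your dual side would be $2\sum<x^{\otimes a},\UL{c},x^{\otimes b}|x>$, which does not vanish for formal reasons, and you would be thrown back on the paper's cyclic-sum lemma. (Also, your parenthetical about unitality of $\phi$ on the dual side is off --- there $I$ enters only through the $m$-wrap, so it is unitality of $A$, Definition \ref{def:unit}, that kills the higher wrappings.) In sum: your route concentrates all the closedness work in step one and buys a cleaner endgame, while the paper splits it between Lemma \ref{lem:nofrac} and Lemma \ref{lem:cycsumzero}; your version is complete once the two Koszul prefactors are actually computed.
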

\begin{proof}
We prove this proposition with the help of following lemmas.
\begin{lemma}\label{lem:nofrac}
$\Psi(x)$ equals the following expression.
  $\Psi(x)=\sum_{k\geq 0}<\UL{x}\otimes x^{\otimes k}|I>.$
\end{lemma}
\begin{proof}
  By the closedness condition of $\phi$, for any $p$ and $q$ we have
  \begin{eqnarray*}
    <x^{\otimes p}\otimes \UL{x}\otimes x^{\otimes q}|I>&+&<x^{\otimes p+q}\otimes \UL{I}|x>\\
    &+&<x^{\otimes q}\otimes I \otimes \UL{x}\otimes x^{\otimes p-1}|x>=0.
  \end{eqnarray*}
  By definition of unital $\AI$-bimodule homomorphisms, we have $$<x^{\otimes q}\otimes I \otimes \UL{x}\otimes x^{\otimes p-1}|x>=0,$$ and the above equation gives
  $$<x^{\otimes p}\otimes \UL{x}\otimes x^{\otimes q}|I>= - <x^{\otimes p+q}\otimes \UL{I}|x> = <\UL{x}\otimes x^{\otimes p+q}|I>,$$
  where the last equality follows from the skew-symmetry of $\phi$. This proves the lemma.
\end{proof}
\begin{lemma}\label{lem:cycsumzero}
  $\sum_{\sigma \in \mathbb{Z}/n\ZZ}<\UL{a_{\sigma(1)}},a_{\sigma(2)},...,a_{\sigma(n-1)}|a_{\sigma(n)}>=0.$
\end{lemma}
\begin{proof}
Fix $a_1,\cdots,a_n$ and denote $[i,j]:=<...,\UL{a_i},...|a_j>.$ Then what we need to prove is $$[1,n]+[2,1]+\cdots+[n,n-1]=0.$$ The closedness condition of strong homotopy inner products gives $$[i,j]+[j,k]=[i,k].$$ Hence, it follows that $$[1,n]+[n,n-1]+\cdots+[2,1]=[1,n]+[n,1]=0.$$
\end{proof}
Now we prove the above proposition. First, assume $$\frac{d}{dt}x(t)=\sum_{i+j=k\geq 0}m_{k+1}(x(t)^{\otimes i}\otimes c(t) \otimes x(t)^{\otimes j}).$$ We denote $x$ by $x(t)$ and $c$ by $c(t)$, for it causes no problem in this proof. 

Applying lemma \ref{lem:nofrac}, the fraction disappears and we get
  \begin{eqnarray}
    \frac{d}{dt}\Psi(x)
    &=&\sum_{l\geq 0}<\UL{\sum_{i+j=k\geq 0}m_{k+1}(x^{\otimes i}\otimes c\otimes x^{\otimes j})}\otimes x^{\otimes l}|I> \label{eq:psider1} \\
   &+&\sum_{l,m\geq 0}<\UL{x}\otimes x^{\otimes l}\otimes \sum_{i+j=k\geq 0}m_{k+1}(x^{\otimes i}\otimes c\otimes x^{\otimes j})\otimes x^{\otimes m}|I>. \label{eq:psider2}
  \end{eqnarray}
  To prove that it is zero, we use the $\AI$-bimodule equation. Namely, we compute
  $$(\phi\circ \WH{m}-m^*\circ \WH{\phi})(\sum_{l\geq 0}\UL{c}\otimes x^{\otimes l}+\sum_{l,m\geq 0}\UL{x}\otimes x^{\otimes l}\otimes c\otimes x^{\otimes m})(I),$$
  which is a priori zero.
  \begin{eqnarray}
     (\phi\circ\WH{m})(\sum_{i\geq 0}\UL{c}\otimes x^{\otimes i})(I)
    &=&\sum_{l\geq 0}<\UL{\sum_{k\geq 0}m_{k+1}(c\otimes x^{\otimes k})}\otimes x^{\otimes l}|I> \label{eq:part1} \\
    &+&\sum_{l,m\geq 0}<\UL{c}\otimes x^{\otimes l}\otimes(\sum_{k\geq 1}m_k(x^{\otimes k}))\otimes x^{\otimes m}|I> \label{eq:MCzero1}
  \end{eqnarray}
  and (\ref{eq:MCzero1}) is zero by Maurer-Cartan equation.

  \begin{eqnarray}
      & &(\phi\circ \WH{m})(\sum_{i,j\geq 0}\UL{x}\otimes x^{\otimes i}\otimes c\otimes x^{\otimes j})(I) \nonumber \\
      &=&\sum_{l,m\geq 0}<\UL{\sum_{k\geq 1}m_k(x^{\otimes k})}\otimes x^{\otimes l}\otimes c\otimes x^{\otimes m}|I> \label{eq:MCzero2} \\
      &+&\sum_{l\geq 0}<\UL{\sum_{i\geq 1,j\geq 0}m_k(x^{\otimes i}\otimes c\otimes x^{\otimes j})}\otimes x^{\otimes l}|I> \label{eq:part2} \\
      &+&\sum_{l,m\geq 0}<\UL{x}\otimes x^{\otimes l}\otimes \sum_{i+j=k\geq 0}m_{k+1}(x^{\otimes i}\otimes c\otimes x^{\otimes j})\otimes x^{\otimes m}|I> \label{eq:part3} \\
      &+&\sum_{l,m,n\geq 0}<\UL{x}\otimes x^{\otimes l}\otimes c\otimes x^{\otimes m}\otimes \sum_{k\geq 1}m_k(x^{\otimes k})\otimes x^{\otimes n}|I>. \label{eq:MCzero3}
  \end{eqnarray}
  Remark again, that (\ref{eq:MCzero2}) and (\ref{eq:MCzero3}) vanish by Maurer-Cartan equation. Observe also that
  \begin{itemize}
    \item (\ref{eq:part1})+(\ref{eq:part2})=(\ref{eq:psider1}),
    \item (\ref{eq:part3})=(\ref{eq:psider2}).
  \end{itemize}
  It remains to show that $$(m^*\circ \WH{\phi})(\sum_{l\geq 0}\UL{c}\otimes x^{\otimes l}+\sum_{l,m\geq 0}\UL{x}\otimes x^{\otimes l}\otimes c\otimes x^{\otimes m})(I)=0.$$
  Since $I$ is the unit, we may easily verify that
  \begin{equation}
    (m^*\circ \phi)(\sum_{l\geq 0}\UL{c}\otimes x^{\otimes l})(I) =\sum_{l\geq 0}<\UL{c}\otimes x^{\otimes l}|x>, \label{eq:mphi1}
  \end{equation}
  \begin{equation}
    (m^*\circ \phi)(\sum_{l\geq 0}\UL{x}\otimes x^{\otimes l}\otimes c)(I) =\sum_{l\geq 0}<\UL{x}\otimes x^{\otimes l}|c>, \label{eq:mphi2}
  \end{equation}
  \begin{equation*}
    (m^*\circ \phi)(\sum_{l\geq 0,m\geq 1}\UL{x}\otimes x^{\otimes l}\otimes c\otimes x^{\otimes m})(I) =\sum_{l,m\geq 0}<\UL{x}\otimes x^{\otimes l}\otimes c\otimes x^{\otimes m}|x>.
  \end{equation*}
  In (\ref{eq:mphi1}) and (\ref{eq:mphi2}), for $l=0$, we have $$<c|x>+<x|c>=0$$ by skew-symmetry. For remaining parts, we collect terms appropriately and use closedness condition to show that they all vanish. More precisely, for $k\geq 1$, we claim that
 $$<\UL{c}\otimes x^{\otimes k}|x>+<\UL{x}\otimes x^{\otimes k}|c>
  +\sum_{l+m=k-1}<\UL{x}\otimes x^{\otimes l}\otimes c\otimes x^{\otimes m}|x>  =0$$

  But this follows from the previous lemma \ref{lem:cycsumzero}, by setting $a_1=c,a_2=\cdots=a_{k+2}=x.$
\end{proof}

Now, we discuss the potential $\Psi$ and the algebraic generalized holonomy map.
We refer readers to \cite{ATZ} or \cite{CL} for the relevant definitions of this construction.

First, recall from Proposition 6.1 of \cite{CL} that given a negative cyclic cohomology class $\alpha$ of an $\AI$-algebra $A$, one obtains a bimodule map $\WT{\alpha}:A \to A^*$. This provides a strong homotopy inner product, if $\alpha$ is
in addition homologically non-degenerate.
The definition \ref{podef2} thus provides the potential $\Psi^\alpha$ using $\alpha$.
Combined with the above proposition, we prove
\begin{theorem}
The potential $\Psi$ provides a map $\Psi:HC_{-}^\bullet (A)\to \mathcal{O}(\mathcal{MC})$
defined by $\alpha \mapsto \Psi^\alpha|_{MC}$. Furthermore, this agrees with the algebraic
analogue of generalized holonomy map of Abbaspour, Tradler and Zeinalian \cite{ATZ}.
\end{theorem}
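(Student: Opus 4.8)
The statement has two halves: that $\alpha\mapsto\Psi^{\alpha}|_{MC}$ descends to a well-defined map out of $HC_{-}^\bullet(A)$, and that the resulting map is the algebraic generalized holonomy map of \cite{ATZ}. The plan is to first recast $\Psi^{\alpha}$ in a pairing form and then treat the two halves separately. By Proposition 6.1 of \cite{CL}, a negative cyclic class $\alpha$ produces a unital bimodule map $\WT{\alpha}=\phi:A\to A^*$, and by Definition \ref{podef2} the potential $\Psi^{\alpha}$ depends only on $\phi$ and the unit $I$. Applying Lemma \ref{lem:nofrac} to a Maurer--Cartan element $b$ removes the fractions, so that
$$\Psi^{\alpha}(b)=\sum_{k\geq 0}<\UL{b},b^{\otimes k}|I>_{0,k}=\sum_{k\geq 0}\phi_{0,k}(\UL{b},b^{\otimes k})(I).$$
Under the dictionary of \cite{CL} this is precisely the evaluation of the negative cyclic cocycle $\alpha$ on the holonomy-type chain $C(b):=\sum_{k\geq 0}\UL{b}\otimes b^{\otimes k}$ with $I$ placed in the output slot. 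First I would make this identification precise, tracking the Koszul signs and the role of the formal variable $u$ in the negative cyclic complex $(C^\bullet(A)[[u]],D)$ computing $HC_{-}^\bullet(A)$, where $D$ denotes the Hochschild-plus-Connes differential.

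For well-definedness on $HC_{-}^\bullet(A)$, the target is handled by Proposition \ref{prop:inv}: since $\Psi^{\alpha}|_{MC}$ is gauge invariant it descends from $MC$ to the moduli space $\mathcal{MC}$ and hence lies in $\mathcal{O}(\mathcal{MC})$. It then remains to show the answer is independent of the representative of the class, i.e. that a coboundary $\alpha=D\beta$ gives $\Psi^{\alpha}|_{MC}=0$. The key observation is that the Maurer--Cartan equation $m(e^{b})=0$ is exactly the statement that $C(b)$ is a cycle for the boundary operator dual to $D$; consequently the pairing $\langle D\beta,C(b)\rangle$ equals, up to sign, $\langle \beta, D^{\vee}C(b)\rangle=0$. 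I would prove this by the same telescoping/cancellation used in Proposition \ref{prop:inv} and Lemma \ref{lem:cycsumzero}, only now with $\alpha$ varying and $b$ fixed rather than the reverse. Combining the two, $\alpha\mapsto\Psi^{\alpha}|_{MC}$ defines a map $HC_{-}^\bullet(A)\to\mathcal{O}(\mathcal{MC})$.

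For the agreement with \cite{ATZ}, I would write out the algebraic analogue of the generalized holonomy map in the form recalled in \cite{CL}, namely as the pairing of $\alpha$ against the holonomy cycle attached to the flat connection $b\in MC(A)$, and compare it term by term with the formula for $\Psi^{\alpha}(b)$ above. The two expressions differ only in packaging: the insertion of the unit $I$ in the output slot of $\phi$ corresponds to the normalization built into the holonomy, so after the translation of \cite{CL} the formulas coincide.

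The main obstacle is the second step, verifying that a negative cyclic coboundary $D\beta$ contributes nothing to the potential on $MC$. Although it is structurally the homology-side mirror of Proposition \ref{prop:inv}, making the cocycle--cycle interpretation rigorous requires carefully coordinating the Koszul signs with the bookkeeping of the unit $I$ in the output slot and the grading shift introduced by $u$; once this cancellation is established, the identification with the holonomy map of \cite{ATZ} reduces to matching conventions.
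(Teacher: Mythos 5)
Your proposal follows essentially the same route as the paper: rewrite $\Psi^\alpha$ on $MC$ via Lemma \ref{lem:nofrac}, invoke Proposition \ref{prop:inv} for the descent to $\mathcal{MC}$, and identify the resulting function with the pairing of $\alpha$ against the holonomy cycle of \cite{ATZ}, with well-definedness on $HC_{-}^\bullet(A)$ coming from the cocycle--cycle pairing exactly as you sketch (the paper does not redo your telescoping argument, since \cite{ATZ} already supplies that $P(a)$ is a negative cyclic cycle, but the mathematical content is the same). One small correction to fold into your ``make the identification precise'' step: the cycle must be $P(b)=\sum_{i\geq 0} I\otimes b^{\otimes i}$ with the unit in the module slot, so that the Connes--Tsygan operator $B$ vanishes on the reduced complex, rather than your $C(b)=\sum_{k\geq 0}\UL{b}\otimes b^{\otimes k}$ (for which $B$ need not vanish); the translation between the two packagings is exactly the paper's unitality computation $\widetilde{\alpha_0}(\UL{b},b,\cdots,b)(I)=\alpha_0(b,\cdots,b)(I)$ together with the identification $\mathrm{Hom}(A\otimes (A[1]/k\cdot 1)^{\otimes n},k)\cong \mathrm{Hom}((A[1]/k\cdot 1)^{\otimes n},A^*)$.
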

\begin{proof}
We only need to prove the relation with that of \cite{ATZ} and we 
recall the construction of a map $\rho:HC_{-}^\bullet (A)\to \mathcal{O}(\mathcal{MC})$.
Here we always work with reduced versions of negative cyclic or Hochschild (co)homologies.

Given a Maurer-Cartan element $a$ of a unital $\AI$-algebra $A$, consider the expression (Definition 8 of \cite{ATZ})
$$P(a):=\sum_{i \geq 0} I \otimes a^{\otimes i} = (I \otimes I) + (I \otimes a) + (I \otimes a \otimes a) + \cdots. $$
One can check that $P(a)$ is a Hochschild homology cycle from the unital property of $I$ and the Maurer-Cartan equation.
Note that Connes-Tsygan operator $B$ of $P(a)$ vanishes on the reduced complex, due to the unit $I$. Hence, $P(a)$ can be
considered as a negative cyclic homology cycle. 

Hence, given a negative cyclic cohomology cycle $\alpha \in HC_{-}^\bullet (A)$,  one can use the pairing 
$<,>:HC^\bullet_{-}(A) \otimes HC_\bullet^-(A) \to \kk$ to define the map $\rho$ as
\begin{equation}\label{dglacase}\rho([\alpha])([a]):=\langle\alpha,\sum_{i\geq 0}I\otimes a^{\otimes i}\rangle\end{equation}

Now, we compare the above expression with that of Lemma \ref{lem:nofrac}. 
We recall the following proposition from \cite{CL}.
\begin{prop}[Proposition 6.1 \cite{CL}]
    Let $\alpha \in C_{red}^{\bullet}(A,A^*)$ be a negative cyclic cocycle.
    We define $$\widetilde{\alpha_0}(\vec{a}, \underline{v}, \vec{b})(w):=\alpha_0(\vec{a},v,\vec{b})(w)-\alpha_0(\vec{b},w,\vec{a})(v).$$
    Then $\widetilde{\alpha_0}$ is an $\AI$-bimodule map from $A$ to $A^*$,
    satisfying the skew-symmetry and closedness condition.
\end{prop}
Here $\alpha_0$ is the part of $\alpha$ which is dual to the inclusion of Hochschild homology cycles.
Also, from the unital property, we have
$$\widetilde{\alpha_0}(\UL{a},a,\cdots,a)(I)=\alpha_0(a,\cdots,a)(I) - \alpha_0(a,\cdots,a,I)(a)= \alpha_0(a,\cdots,a)(I)$$
Hence, 
$$\langle\alpha,I\otimes a^{\otimes i}\rangle  = \langle\alpha_0,I\otimes a^{\otimes i}\rangle  = \alpha_0(a,\cdots,a)(I) = \widetilde{\alpha_0}(\UL{a},a,\cdots,a)(I) = <\UL{a},a,\cdots,a|I> $$
where the equality in the middle follows from the identification
$$\mathrm{Hom}(A\otimes (A[1]/k\cdot 1)^{\otimes n},k)\cong \mathrm{Hom}((A[1]/k\cdot 1)^{\otimes n},A^*).$$
Hence, each term of the function $\rho$ of \cite{ATZ} equals the potential $\Psi$ in the paper given in the Lemma \ref{lem:nofrac}.
This proves the theorem.
\end{proof}

\bibliographystyle{amsalpha}

\end{document}